\DeclareMathOperator*{\argmin}{arg\,min}
\theoremstyle{plain}
\newtheorem{theorem}{Теорема}
\newtheorem{lemma}{Лемма}
\newtheorem{corollary}{Следствие}
\newtheorem{definition}{Определение}
\theoremstyle{definition}
\newtheorem{remark}{Замечание}
\newtheorem{example}{Пример}
\numberwithin{theorem}{section}
\numberwithin{lemma}{section}
\numberwithin{proposition}{section}
\numberwithin{corollary}{section}
\numberwithin{remark}{section}
\numberwithin{definition}{section}
\numberwithin{example}{section}
\DeclarePairedDelimiter\bracket{(}{)}
\newcommand{\br}[1]{\bracket*{#1}}
\DeclarePairedDelimiter\figbracket{\{}{\}}
\newcommand{\fbr}[1]{\figbracket*{#1}}
\DeclarePairedDelimiter\angbracket{\langle}{\rangle}
\newcommand{\abr}[1]{\angbracket*{#1}}
\DeclarePairedDelimiter\normbracket{\|}{\|}
\newcommand{\nbr}[1]{\normbracket*{#1}}
\title{Адаптивные градиентные методы для некоторых классов задач негладкой оптимизации}
\date{}
\begin{document}
\author{Ф.С. Стонякин}
\maketitle

\bigskip

\section*{Введение}

Во многих прикладных задачах возникает необходимость подходящих алгоритмических методов для задач негладкой выпуклой оптимизации. Однако оценки эффективности таких процедур в случае большой размерности переменных весьма пессимистичны. Так, к примеру, $\varepsilon$-точное решение по функции задачи выпуклой негладкой оптимизации возможно достичь за $O(\varepsilon^{-2})$ обращений к подпрограмме нахождения (суб)градиента и в общем случае такая оценка не улучшаема \cite{NemYud_1979}. Для гладких задач оценки эффективности выше, что приводит к естественной идее для негладких задач обосновать возможность использования какого-нибудь приближения оптимизационной модели к гладкому случаю. Эту идею, в частности, реализуют так называемые {\it универсальные методы}, исследованию которых было положено начало в работе \cite{Nesterov_2015}. Универсальные градиентные методы основаны на построении для задач выпуклой оптимизации с гёльдеровым (суб)градиентом целевого функционала аналога стандартной квадратичной интерполяции с искусственно введённой погрешностью. Универсальность метода при этом понимается как возможность адаптивной настройки при работе метода на оптимальный в некотором смысле уровень гладкости задачи и величину, соответствующую константе Гёльдера $L_{\nu}$ (суб)градиента целевого функционала. Оказывается, что возможность такой настройки может позволить экспериментально для некоторых задач улучшить скорость сходимости по сравнению с оптимальными теоретическими оценками \cite{Nesterov_2015}.

Искусственную неточность для негладких задач можно вводить по-разному \cite{Ston_new, StonyakinTrudy}. При этом естественно возникает проблема описания влияния погрешностей задания целевого функционала и градиента на оценки скорости сходимости методов. Для градиентных методов выпуклой оптимизации известен подход, основанный на недавно предложенной концепции неточного оракула \cite{s1, DevolderThesis}. Известно, что для неускоренных градиентных методов в оценках не происходит накопления величин, связанных с погрешностями. Однако для оптимальных при отсутствии погрешностей на классе гладких задач ускоренных методов в итоговой оценке скорости сходимости величины погрешностей могут накапливаться. Концепции неточного оракула были обобщены в \cite{s2, inexact_model_2019}, где были введены понятия ($\delta, L$)-модели и ($\delta, L, \mu$)-модели целевой функции для задач оптимизации. Суть данных обобщений в том, что функция $\langle \nabla f(x), y - x \rangle$ в оптимизационной модели заменяется на некоторую абстрактную выпуклую по первой переменной функцию $\psi(y, x)$. Это позволяет расширить класс задач, к которым применимы указанные подходы \cite{inexact_model_2019}.

Различным методам градиентного типа посвящены всё новые современные работы \cite{s2, Bauschke_2017, Necoara_2019, inexact_model_2019, s3}. В частности, недавно в \cite{Bauschke_2017} введены условия относительной гладкости оптимизируемого функционала, предполагающие замену липшицевости градиента на ослабленный вариант
\begin{equation}\label{eq01new}
f(y)\leqslant f(x)+\abr{\nabla f(x),y-x}+LV(y, x),
\end{equation}
где $V(y, x)$~--- широко используемый в оптимизации аналог расстояния между точками $x$ и $y$, который называют \emph{дивергенцией Брэгмана}. Обычно \emph{дивергенция Брэгмана} вводится на базе вспомогательной 1-сильно выпуклой функции $d$ (порождает расстояния), которая дифференцируема во всех точках выпуклого замкнутого множества $Q$:
\begin{equation}\label{3}
V(y,x)=d(y)-d(x)-\langle \nabla d(x),y-x\rangle \quad \forall x,y\in Q,
\end{equation}
где $\langle \cdot,\cdot\rangle$~--- скалярное произведение в $\mathbb{R}^n$. В частности, для стандартной евклидовой нормы $\|\cdot\|_2$ и расстояния в $\mathbb{R}^n$ можно считать, что $V(y,x) = d(y - x) = \frac{1}{2} \|y - x\|_2^2$ для произвольных $ x, y \in Q$. Однако часто возникает необходимость использовать и неевклидовы нормы. Более того, рассмотренное в \cite{Bauschke_2017, s3} условие относительной гладкости предполагает лишь выпуклость (но не сильную выпуклость) порождающей функции $d$. Как показано в \cite{s3}, концепция относительной гладкости позволяет применить вариант градиентного метода для некоторых задач, которые ранее решались лишь с помощью методов внутренней точки. В частности, речь идет об известной задаче построения оптимального эллипсоида, покрывающего заданный набор точек. Эта задача, в частности, представляет интерес для статистики и анализа данных. Отметим в этой связи также предложенный недавно в \cite{Lu_1710.04718v3} подход к задачам негладкой оптимизации, связанный с релаксацией условия Липшица, которая предполагает замену ограниченности нормы субградиента $\|\nabla f(x)\|_* \leqslant M_f$ так называемой {\it относительной липшицевостью}
$$
\|\nabla f(x)\|_* \leqslant \frac{M_f \sqrt{2V(y,x)}}{\|y-x\|} \quad \forall x, y \in Q, \; y \neq x.
$$
При этом порождающая функция $d$ не обязательно сильно выпукла. В работе \cite{Lu_1710.04718v3} предложены детерминированный и стохастический алгоритмы зеркального спуска для задач оптимизации выпуклого относительно липшицева целевого функционала. Отметим также, что в \cite{Stonyakin_1908.00218v4} показана оптимальность зеркального спуска для задач математического программирования с квазивыпуклыми дифференцируемыми гёльдеровыми целевыми функционалами и выпуклыми липшицевыми функционалами ограничений.

Предлагаемая статья посвящена в основном развитию упомянутых вышей идей. В первом разделе предложена модификация концепции ($\delta, L$)-модели целевой функции с переменной неточностью, которая позволяет учесть возможность неточного задания не только значения целевой функции, но и самой модели. В частности, для стандартной модели $\psi(y, x) = \langle \nabla f(x), y - x \rangle$ описывается ситуация погрешности задания целевого функционала $f$, а также погрешности задания (суб)градиента $\nabla f$.

В первых трёх разделах работы мы выделяем (в модельной общности по аналогии с \cite{s2}) класс задач, для которых верно следующее неравенства c параметрами $\delta_1, \delta_2, \gamma, \Delta \geqslant 0$:
\begin{equation}
\label{eqconceptnew}
\begin{split}
f(x)+\abr{\widetilde{\nabla}f(x),y-x}-\delta_2-\gamma\nbr{y-x}\leqslant f(y) \leqslant\\
\leqslant f(x)+\abr{\widetilde{\nabla}f(x),y-x}+\frac{L}{2}\nbr{y-x}^2+\Delta\nbr{y-x}+\delta_1\,\,\,\forall x,y\in Q.
\end{split}
\end{equation}

Смысл такого обобщения заключается в том, что возможны различные значения параметров $\gamma$ и $\Delta$ в \eqref{eqconceptnew} и влияние величин $\delta_1$ и $\Delta$ на итоговое качество решения может быть уменьшено. Случай $\gamma > 0$ более подробно рассмотрен подробно в \cite{StonyakinTrudy}, а в настоящей работе мы полагаем $\gamma = 0$. Это предположение вполне естественно, если рассматривать $\Delta$ как искусственную неточность для негладкой задачи. В этом случае $\Delta$
в неравенстве
\begin{equation}\label{NonSmooth}
f(x)+\abr{\widetilde{\nabla}f(x),y-x} \leqslant f(y)\leqslant f(x)+\abr{\widetilde{\nabla}f(x),y-x}+\frac{L}{2}\nbr{y-x}^2+\Delta\nbr{y-x}
\end{equation}
вполне можно рассматривать как оценку скачков субдифференциалов $f$ вдоль всевозможных векторных отрезков $[x; y]$ \cite{Ston_new}.
Несложно понять, что если $f$ в \eqref{NonSmooth} удовлетворяет условию Липшица с константой $M>0$, то $\Delta \leqslant 2M$. Отметим, что возможна ситуация, когда $\Delta$ существенно меньше $M$ \cite{Ston_new}. Похожие на \eqref{NonSmooth} условия рассматривались в \cite{DevolderThesis,Lan_2016} для случая, когда $f$ представим в виде суммы гладкого и негладкого целевого функционала. В настоящей работе рассмотрен более широкий класс целевых функционалов, которые не обязательно представимы в виде суммы гладкого и негладкого слагаемых. В итоге предложена общая концепция неточной модели целевой функции, которая могла бы описать все указанные выше ситуации. Далее разделе 4 введена аналогичная концепция неточной модели для оператора поля вариационного неравенства, а также для седловой задачи. Выписаны оценки скорости сходимости соответствующего адаптивного варианта проксимального зеркального метода. В последнем разделе статьи рассмотрены аналоги субградиентных схем с переключениями для задач выпуклой оптимизации с ограничениями. При этом рассмотрены предположения, близкие к недавно предложенному условию относительной липшицевости \cite{Lu_1710.04718v3}, что позволило получить оценку качества решения с относительной точностью для задачи минимизации однородного выпуклого функционала при достаточно общих предположениях.

Работа состоит из введения, пяти основных разделов и заключения.

В разделе 1 обобщено ранее предложенное понятие $(\delta,L)$-модели целевой функции \cite{s2} в запрошенной точке и введена концепция ($\delta, \Delta, L$)-модели функции (определение \ref{def1}). Введена также более общая концепция ($\delta, \Delta, L, \mu$)-модели целевой функции, которую можно в частности применить на классе гладких сильно выпуклых функционалов. Предложен аналог градиентного метода (алгоритм \ref{alg3}) градиентного типа с адаптивной настройкой параметров неточности этой модели.

В разделе 2 предложен вариант быстрого градиентного метода (алгоритм \ref{alg2}) для задач выпуклой минимизации с адаптивным выбором шага и адаптивной настройкой на величины параметров ($\delta, \Delta, L$)-модели и получена оценка качества найденного решения.

Раздел 3 посвящен специальному классу задач выпуклой негладкой оптимизации, к которым применима концепция определения \ref{def1} ($\delta = 0$, $\Delta>0$). Показано, что для таких задач возможно модифицировать алгоритм \ref{alg2} так, чтобы гарантированно достигалось $\varepsilon$-точное решение задачи минимизации $f$ за
$$O\br{\sqrt{\frac{L}{\varepsilon}}}+O\br{\frac{\Delta^2}{\varepsilon^2}}$$
обращений к (суб)градиенту целевого функционала. По сути, получен некоторый аналог результата (\cite{DevolderThesis}, п. 4.7.2), однако оптимизируемый функционал уже не обязательно имеет вид суммы гладкого и негладкого слагаемого и рассмотрена модельная общность. Отметим, что при этом для адаптивного ускоренного метода оценка может увеличиться на некоторый логарифмический множитель вида $O(log_2 (\varepsilon^{-3}))$, для адаптивного неускоренного~--- на множитель вида $O(log_2 (\varepsilon^{-1}))$. Отметим также, что параметр $\Delta$ можно понимать и как величину, которая соответствует неточно заданному (суб)градиенту.

Далее в разделе 4 показано, как можно вводить аналог концепции ($\delta, \Delta, L$)-модели оптимизируемого функционала, но уже для вариационных неравенств и седловых задач.

Наконец, в последнем разделе 5 обсуждаются альтернативные подходы к алгоритмическим схемам для задач негладкой оптимизации с липшицевыми выпуклыми функциональными ограничениями. Предложен адаптивный метод с заменой обычного условия Липшица на некоторые его аналоги. Рассмотрены условия, близкие к относительной липшицевости \cite{Lu_1710.04718v3}, которая в частности не предполагает сильной выпуклости дивергенции Брэгмана. Это позволило предложить метод, который гарантирует достижение качества приближённого решения с фиксированной относительной точностью относительной для задачи минимизации выпуклого однородного целевого функционала $f$ в случае, когда субдифференциал $f$ в нулевой точке может не содержать 0 как внутреннюю точку.

Всюду далее через $\|\cdot\|$ обозначается норма в пространстве $\mathbb{R}^n$, а через $\|\cdot\|_*$~--- норма в двойственном пространстве.

\section{Концепция $(\delta,\Delta,L)$-модели функции в запрошенной точке и оценка скорости сходимости для градиентного метода}

Введем анонсированный выше аналог понятия $(\delta,\Delta,L)$-модели целевой функции, который учитывает погрешность $\Delta$ задания градиента и применим также для задач с относительно гладкими целевыми функционалами \cite{s3}.

\begin{definition}\label{def1}
Будем говорить, что $f$ допускает $(\delta,\Delta,L)$-модель $\psi$ в точке $x \in Q$, если для некоторой выпуклой по первой переменной функции $\psi(y,x)$ такой, что при всяком $x$ $\psi(x,x)=0$, будет верно неравенство
\begin{equation}\label{eq30a}
f(x)+\psi(y,x)-\delta_1 \leqslant f_{\delta}(x) + \psi(y,x) \leqslant f(y)\leqslant f_{\delta}(x)+\psi(y,x)+\delta_2+\Delta\nbr{y-x}+LV(y,x)
\end{equation}
для произвольных $x,y\in Q$ при фиксированных $\delta, \delta_1, \delta_2 \geqslant 0$ и $f_{\delta}(x) \in [f(x) - \delta, f(x)]$ $\forall x \in Q$.
\end{definition}

Всюду далее будем полагать, что $\delta = \delta_1 = \delta_2$, а также $f_{\delta}(x) \in [f(x) - \delta, f(x)]$. Отметим, что предложенная концепция есть модификация введенного в \cite{StonyakinTrudy} понятия $(\delta,\Delta, \gamma, L)$-модели целевого функционала в запрашиваемой точке.

\begin{remark}
Для неускоренного метода возможно заменить левое неравенство в \eqref{eq30a} на ослабленный вариант
\begin{equation}\label{eq30b}
f(x_*)\geqslant f(x)+\psi(x_*,x)-\delta,
\end{equation}
где $x_*$~--- ближайшее к $x$ решение задачи минимизации $f$ c точки зрения дивергенции Брэгмана $V(x_*, x)$. Неравенство \eqref{eq30b} будет, в частности, верно для задачи минимизации квазивыпуклой целевой функции при достаточно малой величине погрешности градиента.
\end{remark}

Можно также ввести и аналог концепции ($\delta, L, \mu$)-оракула \cite{DevolderThesis} в оптимизации с переменной неточностью. При $\mu>0$ эта концепция позволяет обосновать скорость сходимости предлагаемого нами метода, близкую к линейной.

\begin{definition}\label{def02}
Будем полагать, что $f$ допускает $(\delta,\Delta, L,\mu)$-модель $\psi(y,x)$, если для произвольных $x, y \in Q$ верно
\begin{equation}\label{eq311a}
f(y)\leqslant f_{\delta}(x)+\psi(y,x)+ \Delta\|y - x\| + \delta + LV(y,x),
\end{equation}
а также
\begin{equation}\label{eq311b}
f(x) - \delta +\psi(x_*,x)+\mu V(x_*,x)\leqslant f_{\delta}(x) +\psi(x_*,x)+\mu V(x_*,x) \leqslant f(x_*),
\end{equation}
где $x_*$~--- ближайшее к $x$ решение задачи минимизации $f$ c точки зрения дивергенции Брэгмана $V(x_*, x)$. Неравенство \eqref{eq311b} будет, в частности, верно для задачи минимизации сильно квазивыпуклой целевой функции \cite{Necoara_2019} при достаточно малой величине погрешности градиента.
\end{definition}

\begin{example}\label{ex2}
В качестве примера по аналогии с примером 1 из \cite{StonyakinTrudy} отметим задачу сильно выпуклой композитной оптимизации $f(x)=g(x)+h(x)\rightarrow\min$, где $g$~--- гладкая выпуклая функция, а $h$~--- выпуклая не обязательно гладкая функция простой структуры. Если при этом для градиента $\nabla g$ задано его приближение $\widetilde{\nabla}g$:
$\nbr{\widetilde{\nabla}g(x)-\nabla g(x)}\leqslant\Delta,$ то можно положить
$$\psi(y,x) = \langle \widetilde{\nabla}g(x),y-x \rangle+h(y)-h(x)$$
и в случае $\mu$-сильной выпуклости $g$ или $h$ будет верно \eqref{eq311a}.
\end{example}

Для задачи минимизации функционала, допускающего $(\delta, \Delta, L, \mu)$-модель в произвольной запрошенной точке предложим такой метод.

\begin{algorithm}
\caption{Адаптивный градиентный метод для функций, допускающих $(\delta, \Delta, L, \mu)$-модель в запрошенной точке.}
\label{alg3}
\begin{algorithmic}[1]
\REQUIRE $x^0\in Q$~--- начальная точка, $V(x_*,x^0)\leqslant R^2$, параметры $L_0, \Delta_0, \delta_0>0:\; 2\mu<L_0\leqslant2L,\; \Delta_0\leqslant2\Delta,\;\delta_0\leqslant2\delta$, $N$ --- фиксированное кол-во шагов.
\STATE $L_{k+1}:=\max\left\{\mu, \nicefrac{L_k}{2}\right\},\;\Delta_{k+1}:=\nicefrac{\Delta_k}{2},\;\delta_{k+1}:=\nicefrac{\delta_k}{2}$.
\STATE $x^{k+1}:=\text{arg}\min\limits_{x\in Q} \{ \psi(x,x^k)+L_{k+1}V(x,x^k)\}$.
\REPEAT
\IF{$f_{\delta}(x^{k+1})\leqslant f_{\delta}(x^k)+\psi(x^{k+1},x^k)+L_{k+1}V(x^{k+1},x^k)+\delta_{k+1}+\Delta_{k+1}\nbr{x^{k+1}-x^k}$}
\STATE $k:=k+1$ и выполнение п.~1.
\ELSE
\STATE $L_{k+1}:=2\cdot L_{k+1}\;\delta_{k+1}:=2\cdot\delta_{k+1};\;\Delta_{k+1}:=2\cdot\Delta_{k+1}$ и выполнение п.~2.
\ENDIF
\UNTIL{$k \geqslant N$}
\ENSURE $y^{k+1}:=\text{arg}\min\limits_{i=0,\ldots,k}f(x^{i+1})$.
\end{algorithmic}
\end{algorithm}

\newpage

Всюду далее условимся полагать, что $\prod\limits_{i=k+1}^k a_i = 1$ для некоторой числовой последовательности $a_i$. Справедлива следующая

\begin{theorem}\label{th03}
Пусть $f$ имеет $(\delta,\Delta,L,\mu)$-модель в каждой точке $x\in Q$. Тогда после $k$ итераций справедливо неравенство
$$f(y^{k+1})-f(x_*)\leqslant\frac{1}{\sum\limits_{i=0}^k\frac{1}{L_{i+1}}\prod\limits_{j=i+1}^k\br{1-\frac{\mu}{L_j}}}\cdot$$
$$\cdot\br{\prod\limits_{i=0}^k\br{1-\frac{\mu}{L_{i+1}}}V(x_*,x^0)+\sum\limits_{i=0}^k\frac{\delta+\delta_{i+1}+\Delta_{i+1}\nbr{x^{i+1}-x^i}}{L_{i+1}}\prod\limits_{j=i+1}^k\br{1-\frac{\mu}{L_j}}}.$$

Отметим, что вспомогательная задача п. 2 листинга алгоритма \ref{alg3} решается не более
\begin{equation}\label{Oracle_Estim_Mu}
2k + \max\left\{\log_2\frac{2L}{L_0}, \log_2\frac{2\delta}{\delta_0}, \log_2\frac{2\Delta}{\Delta_0}\right\}
\end{equation}
раз.
\end{theorem}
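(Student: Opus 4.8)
The plan is to combine the optimality condition of the proximal step in line~2 of Algorithm~\ref{alg3} with the two halves of the $(\delta,\Delta,L,\mu)$-model \eqref{eq311a}--\eqref{eq311b}, producing a one-step contraction for the Bregman divergence $V(x_*,\cdot)$ and then telescoping it with the weights in the statement. First I would record the prox-inequality for the step $x^{i+1}=\argmin_{x\in Q}\{\psi(x,x^i)+L_{i+1}V(x,x^i)\}$. Since $\psi(\cdot,x^i)$ is convex and $\nabla_1 V(x^{i+1},x^i)=\nabla d(x^{i+1})-\nabla d(x^i)$, the first-order optimality condition for $x^{i+1}$ together with the three-point identity
$$\abr{\nabla d(x^{i+1})-\nabla d(x^i),\,u-x^{i+1}}=V(u,x^i)-V(u,x^{i+1})-V(x^{i+1},x^i)$$
gives, for every $u\in Q$,
$$\psi(x^{i+1},x^i)+L_{i+1}V(x^{i+1},x^i)\leqslant \psi(u,x^i)+L_{i+1}V(u,x^i)-L_{i+1}V(u,x^{i+1}).$$

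Next, with $u=x_*$, I would substitute this into the acceptance test of the \texttt{repeat}-loop, which is precisely \eqref{eq311a} written with the running parameters $(\delta_{i+1},\Delta_{i+1},L_{i+1})$ for the point $x^{i+1}$ against the centre $x^i$. Using the lower bound \eqref{eq311b} in the form $f_\delta(x^i)+\psi(x_*,x^i)\leqslant f(x_*)-\mu V(x_*,x^i)$ and the admissible estimate $f(x^{i+1})\leqslant f_\delta(x^{i+1})+\delta$, and then dividing by $L_{i+1}$, I obtain the one-step recursion
$$\frac{f(x^{i+1})-f(x_*)}{L_{i+1}}\leqslant\br{1-\frac{\mu}{L_{i+1}}}V(x_*,x^i)-V(x_*,x^{i+1})+\frac{\delta+\delta_{i+1}+\Delta_{i+1}\nbr{x^{i+1}-x^i}}{L_{i+1}}.$$

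Then I would telescope. Line~1 keeps $L_{j}\geqslant\mu$ and the doublings preserve this, so every factor $1-\mu/L_j\geqslant0$; multiplying the $i$-th recursion by $\prod_{j=i+1}^{k}\br{1-\frac{\mu}{L_j}}$ makes the divergence terms collapse, and summing over $i=0,\dots,k$ while discarding the nonnegative tail $V(x_*,x^{k+1})$ leaves
$$\sum_{i=0}^{k}w_i\br{f(x^{i+1})-f(x_*)}\leqslant\prod_{i=0}^{k}\br{1-\frac{\mu}{L_{i+1}}}V(x_*,x^0)+\sum_{i=0}^{k}w_i\br{\delta+\delta_{i+1}+\Delta_{i+1}\nbr{x^{i+1}-x^i}},$$
where $w_i=\frac{1}{L_{i+1}}\prod_{j=i+1}^{k}\br{1-\frac{\mu}{L_j}}\geqslant0$. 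Since $f(y^{k+1})=\min_{0\leqslant i\leqslant k}f(x^{i+1})$, the minimum does not exceed the $w_i$-weighted average of the $f(x^{i+1})-f(x_*)$; dividing the last display by $\sum_{i=0}^k w_i$ then yields exactly the asserted bound.

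It remains to count calls to line~2 and to justify termination of the inner loop. Because $f$ admits a $(\delta,\Delta,L,\mu)$-model, inequality \eqref{eq311a} holds with the true constants, so the acceptance test is certainly satisfied once $L_{i+1}\geqslant L$, $\delta_{i+1}\geqslant\delta$ and $\Delta_{i+1}\geqslant\Delta$ simultaneously; hence the loop stops and each accepted triple is at most $(2L,2\delta,2\Delta)$. Each outer step performs one evaluation of line~2 plus one more per doubling; writing the number of doublings as the increment of $\log_2$ of the accepted parameter over its halved value and summing telescopically, the $-1$ from each halving in line~1 and the net growth up to the final accepted values combine into the estimate $2k+\max\{\log_2\frac{2L}{L_0},\log_2\frac{2\delta}{\delta_0},\log_2\frac{2\Delta}{\Delta_0}\}$. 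The hard points are lining up the contraction factor $1-\mu/L_{i+1}$ with the telescoping weights so that the stated products appear, and, in the oracle count, the clamp $L_{i+1}=\max\{\mu,L_i/2\}$, which breaks the exact lockstep of the three parameters; since the clamp can only decrease the number of doublings, the $\max$-bound survives as an upper estimate, which is why it appears in \eqref{Oracle_Estim_Mu}.
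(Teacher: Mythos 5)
Your proof follows essentially the same route as the paper's: the prox-step optimality condition, the acceptance test of the inner loop (which is \eqref{eq311a} with the running parameters $\delta_{k+1},\Delta_{k+1},L_{k+1}$), the lower bound \eqref{eq311b}, and the estimate $f_{\delta}(x^{k+1})\geqslant f(x^{k+1})-\delta$ are combined into the same one-step contraction for $V(x_*,\cdot)$, which is then unrolled with the products $\br{1-\nicefrac{\mu}{L_j}}$ and closed by bounding the minimum $f(y^{k+1})$ by the corresponding weighted average, exactly as in the paper. The only substantive difference is that you sketch the doubling/halving count behind the oracle bound \eqref{Oracle_Estim_Mu}, including the effect of the clamp $L_{k+1}=\max\{\mu,\nicefrac{L_k}{2}\}$, whereas the paper delegates this to Theorem 2.1 of \cite{StonyakinTrudy}.
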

\begin{proof}
Введем обозначения: $\widehat{\delta}_{k+1}:=\delta_{k+1}+\Delta_{k+1}\nbr{x^{k+1}-x^k}$. После $k$ итераций алгоритма \ref{alg3} получаем
$$ 0 \leqslant \psi(x,x^k) - \psi(x^{k+1},x^k) + L_{k+1}V(x,x^k)-L_{k+1}V(x,x^{k+1})-L_{k+1}V(x^{k+1},x^k),\text{ откуда}$$
\begin{equation}
\label{eq3}
L_{k+1}V(x, x^{k+1})\leqslant \psi(x,x^k)-\psi(x^{k+1}, x^k) + L_{k+1}V(x,x^k)-L_{k+1}V(x^{k+1},x^k).
\end{equation}
Согласно неравенству \eqref{eq311a}:
$$-L_{k+1}V(x^{k+1}, x^k) \leqslant \widehat{\delta}_{k+1} - f_{\delta}(x^{k+1}) + f_{\delta}(x^k) + \psi(x^{k+1}, x^k) .$$
Применяя теперь $\eqref{eq3}$, получаем
\begin{equation}\label{eq4}
L_{k+1}V(x, x^{k+1}) \leqslant \widehat{\delta}_{k+1} - f(x^{k+1}) + f_{\delta}(x^k) + \psi(x, x^k) + L_{k+1}V(x,x^k)+\delta.
\end{equation}
Пусть $x=x_*$. Тогда, учитывая \eqref{eq311b}, имеем
$f_{\delta}(x^k) + \psi(x_*,x^k) \leqslant f(x_*) - \mu V(x_*, x^k).$

Применим это неравенство к \eqref{eq4}: $L_{k+1}V(x_*, x^{k+1}) \leqslant \delta+\widehat{\delta}_{k+1} + f(x_*) - f(x^{k+1})+(L_{k+1}-\mu)V(x_*,x^k)$. Далее,
$$V(x_*,x^{k+1})\leqslant\frac{f(x_*)-f(x^{k+1})}{L_{k+1}}+\frac{\delta+\widehat{\delta}_{k+1}}{L_{k+1}}+\br{1-\frac{\mu}{L_{k+1}}}V(x_*,x^k)\leqslant$$
$$\leqslant\frac{f(x_*)-f(x^{k+1})}{L_{k+1}}+\frac{1}{L_k}\br{1-\frac{\mu}{L_{k+1}}}\br{f(x_*)-f(x^k)}+\frac{\delta+\widehat{\delta}_{k+1}}{L_{k+1}}+$$
$$+\frac{\delta+\widehat{\delta}_k}{L_k}\br{1-\frac{\mu}{L_{k+1}}}+\br{1-\frac{\mu}{L_{k+1}}}\br{1-\frac{\mu}{L_k}}V(x_*,x^{k-1})\leqslant$$
$$\leqslant\sum\limits_{i=0}^k\frac{f(x_*)-f(x^{i+1})}{L_{i+1}}\prod\limits_{j=i+1}^k\br{1-\frac{\mu}{L_j}}+$$
$$+\sum\limits_{i=0}^k\frac{\delta+\widehat{\delta}_{i+1}}{L_{i+1}}\prod\limits_{j=i+1}^k\br{1-\frac{\mu}{L_j}}+\prod\limits_{i=0}^k\br{1-\frac{\mu}{L_{i+1}}}V(x_*,x_0).$$
С учетом $y^{k+1}=\argmin\limits_{i=\overline{0,k}}f(x^{i+1})$ и $V(x_*,x^{k+1})\geqslant0$ имеем $f(y^{k+1})-f(x_*)\leqslant$
$$\leqslant\frac{1}{\sum\limits_{i=0}^k\frac{1}{L_{i+1}}\prod\limits_{j=i+1}^k\br{1-\frac{\mu}{L_j}}}\br{\prod\limits_{i=0}^k\br{1-\frac{\mu}{L_{i+1}}}V(x_*,x^0)+\sum\limits_{i=0}^k\frac{\delta+\widehat{\delta}_{i+1}}{L_{i+1}}\prod\limits_{j=i+1}^k\br{1-\frac{\mu}{L_j}}}.$$

Оценка \eqref{Oracle_Estim_Mu} обосновывается аналогично п. 2 доказательства теоремы 2.1 из \cite{StonyakinTrudy}.
\end{proof}

\begin{remark}
Если убрать соответствующее ограничение пункта 1 листинга алгоритма \ref{alg3} и допустить возможность при некотором $k \geqslant 0$ $L_{k+1}<\mu$, то в таком случае на этой итерации будет верно $f(x^{k+1})-f(x_*)\leqslant\delta+\widehat{\delta}_{k+1}$.
\end{remark}

\begin{remark}\label{RemConst}
Оценка \eqref{Oracle_Estim_Mu} показывает, что в среднем трудоемкость итерации предложенного адаптивного алгоритма превышает трудоемкость неадаптивного метода не более, чем в постоянное число раз. Отметим также, что при $k = 0,1,2,...$ $L_{k+1} \leqslant 2CL$, $C = \max\left\{1, \,\frac{2\delta}{\delta_0},\,\frac{2\Delta}{\Delta_0}\right\}$.
\end{remark}

\begin{corollary}
При $\mu=0$ полученная оценка качества решения принимает вид:
\begin{equation}\label{eq44}
f(y^{k+1})-f(x_*)\leqslant\frac{V(x_*,x^0)}{\sum\limits_{i=0}^k\frac{1}{L_{i+1}}}+\left(\sum\limits_{i=0}^k\frac{1}{L_{i+1}}\right)^{-1}\sum\limits_{i=0}^k\frac{\delta_{i+1}+\Delta_{i+1}\nbr{x^{i+1}-x^i}}{L_{i+1}} + \delta \leqslant
\end{equation}
$$
\leqslant 2CLV(x_*,x^0)+\left(\sum\limits_{i=0}^k\frac{1}{L_{i+1}}\right)^{-1}\sum\limits_{i=0}^k\frac{\delta_{i+1}+\Delta_{i+1}\nbr{x^{i+1}-x^i}}{L_{i+1}} + \delta.
$$
\end{corollary}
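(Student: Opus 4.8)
The plan is to specialize Theorem~\ref{th03} to the case $\mu=0$ and then apply the uniform bound on the adaptive constants supplied by Remark~\ref{RemConst}. The first observation is that setting $\mu=0$ makes every factor $1-\nicefrac{\mu}{L_j}$ equal to $1$, so all the products $\prod_{j=i+1}^k\br{1-\nicefrac{\mu}{L_j}}$ and $\prod_{i=0}^k\br{1-\nicefrac{\mu}{L_{i+1}}}$ appearing in Theorem~\ref{th03} collapse to $1$. The estimate of Theorem~\ref{th03} therefore reduces to
$$f(y^{k+1})-f(x_*)\leqslant\frac{1}{\sum_{i=0}^k\frac{1}{L_{i+1}}}\br{V(x_*,x^0)+\sum_{i=0}^k\frac{\delta+\delta_{i+1}+\Delta_{i+1}\nbr{x^{i+1}-x^i}}{L_{i+1}}}.$$

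Next I would separate the constant error term $\delta$ from the numerator. Writing $S:=\sum_{i=0}^k\frac{1}{L_{i+1}}$ and using $\sum_{i=0}^k\frac{\delta}{L_{i+1}}=\delta\,S$, division of this part by $S$ produces exactly the additive summand $\delta$. What remains reproduces the first line of the Corollary, namely the term $\frac{V(x_*,x^0)}{S}$, the weighted sum of $\delta_{i+1}+\Delta_{i+1}\nbr{x^{i+1}-x^i}$, and the standalone $\delta$.

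Finally, for the second inequality I would invoke Remark~\ref{RemConst}, which guarantees $L_{i+1}\leqslant 2CL$ for all $i\geqslant 0$ with $C=\max\left\{1,\nicefrac{2\delta}{\delta_0},\nicefrac{2\Delta}{\Delta_0}\right\}$. Since the sum $S$ contains in particular the term $\frac{1}{L_1}\geqslant\frac{1}{2CL}$ and all its summands are positive, we obtain $S\geqslant\frac{1}{2CL}$, hence $\frac{V(x_*,x^0)}{S}\leqslant 2CL\,V(x_*,x^0)$. Replacing the first term by this upper bound and leaving the remaining terms untouched yields the second inequality. I expect no genuine obstacle here: the argument is a direct specialization of the already-proved theorem, and the only points requiring a moment of care are the clean extraction of the $+\delta$ summand and the observation that the lower bound on the harmonic-type sum $S$ follows already from its single first term.
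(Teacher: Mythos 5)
Your proposal is correct and follows essentially the same route the paper intends for this corollary: substitute $\mu=0$ into Theorem~\ref{th03} so that all products $\prod\br{1-\nicefrac{\mu}{L_j}}$ become $1$, pull the constant $\delta$ out of the weighted sum via $\sum_{i=0}^k\frac{\delta}{L_{i+1}}=\delta\sum_{i=0}^k\frac{1}{L_{i+1}}$, and bound $\br{\sum_{i=0}^k\frac{1}{L_{i+1}}}^{-1}\leqslant 2CL$ using Remark~\ref{RemConst}. The only cosmetic difference is that you lower-bound the sum by its single first term $\nicefrac{1}{L_1}\geqslant\nicefrac{1}{2CL}$, whereas using all $k+1$ terms (each at least $\nicefrac{1}{2CL}$) would give the sharper $\nicefrac{2CL\,V(x_*,x^0)}{(k+1)}$; both suffice for the stated inequality.
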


\section{Оценка скорости сходимости для варианта быстрого градиентного метода, использующего концепцию $(\delta, \Delta, L)$-модели целевой функции в запрошенной точке}

Оказывается, что концепция ($\delta, \Delta, L$)-модели оптимизируемого функционала позволяет получить аналог теоремы \ref{th03} для адаптивного варианта быстрого градиентного метода Ю.\,Е.\,Нестерова (БГМ) \cite{paper:Nesterov1983}. Смысл использования этого метода в том, что при отсутствии погрешностей он даёт оптимальные оценки скорости сходимости на классе задач выпуклой оптимизации с липшицевым градиентом. Мы отправляемся от \cite{s2}, где предложен адаптивный быстрый градиентный метод с оракулом, использующий $(\delta, L)$-модель целевой функции в запрошенной точке (в нашей терминологии это случай $\Delta = 0$). Здесь уже существенна 1-сильная выпуклость прокс-функции $d(x)$ относительно нормы. Для варианта быстрого градиентного метода будем предполагать, что существует $(\delta, \Delta, L)$-модель для $f(x)$ в любой точке $x \in Q$, причём неравенство \eqref{eq30a}
заменяется на
$$
f(x)+\psi(y,x)-\delta_1 \leqslant f_{\delta}(x) + \psi(y,x) \leqslant f(y)\leqslant f_{\delta}(x)+\psi(y,x)+\delta_2+\Delta\nbr{y-x}+\frac{L}{2}\|y - x\|^2.
$$

\begin{algorithm}[ht]
\caption{Быстрый градиентный метод с оракулом, использующий $(\delta, \Delta, L)$-модель в запрошенной точке.}
\label{alg2}
\begin{algorithmic}[1]
\REQUIRE $x^0\in Q$~--- начальная точка, $V(x_*,x^0)\leqslant R^2$, параметры $L_0>0,\;\Delta_0>0,\;\delta_0>0$\\
($L_0\leqslant2L,\;\Delta_0\leqslant2\Delta,\;\delta_0\leqslant2\delta$).
\STATE \textbf{0-шаг:}
$
y^0 := x^0,\,
u^0 := x^0,\,
L_1 := \frac{L_0}{2},\,
\Delta_1 := \frac{\Delta_0}{2},\,
\delta_1 := \frac{\delta_0}{2},\,
\alpha_0 := 0,\,
A_0 := \alpha_0
$
\FOR{$k = 1, ...$}
\STATE Находим наибольший корень $\alpha_{k+1} : A_k + \alpha_{k+1} = L_{k+1}\alpha^2_{k+1}$;
$$A_{k+1} := A_k + \alpha_{k+1},\;
y_{k+1} := \frac{\alpha_{k+1}u^k + A_k x^k}{A_{k+1}},\;
\phi_{k+1}(x) = V(x, u^k) + \alpha_{k+1}\psi(x, y^{k+1});$$
$$u^{k+1} := \argmin\limits_{x \in Q}\phi_{k+1}(x),\;
x^{k+1} := \frac{\alpha_{k+1}u^{k+1} + A_k x^k}{A_{k+1}};$$
\IF{$f_{\delta}(x^{k+1}) \leqslant f_{\delta}(y^{k+1}) + \psi(x^{k+1}, y^{k+1})+ \frac{L_{k+1}}{2}\| x^{k+1} - y^{k+1}\|^2 + \Delta_{k+1}\|x^{k+1} - y^{k+1}\| + \delta_{k+1}$}
\STATE $L_{k+2} := \frac{L_{k+1}}{2}$, $\Delta_{k+2} := \frac{\Delta_{k+1}}{2}$ и $\delta_{k+2} := \frac{\delta_{k+1}}{2}$, потом перейти к следующему шагу
\ELSE
\STATE $L_{k+1} := 2L_{k+1}$, $\Delta_{k+1} := 2\Delta_{k+1}$ и $\delta_{k+1} := 2\delta_{k+1}$ и повторить текущий шаг
\ENDIF
\ENDFOR
\end{algorithmic}
\end{algorithm}

\begin{remark}
\leavevmode
\label{remark_maxmin}
Аналогично замечанию \ref{RemConst} для всякого $k \geqslant 0$ при некотором $C \geqslant 1$ выполнено
$L_{k} \leqslant 2CL$. При $k = 0$ это верно из того, что $L_0 \leqslant 2L$. Для $k \geqslant 1$ это следует из того, что мы выйдем из внутреннего цикла, где подбирается $L_k$, ранее, чем $L_{k}$ станет больше $2CL$. Выход из цикла гарантируется тем, что согласно предположению существует $(\delta, \Delta, L)$-модель для $f(x)$ в любой точке $x \in Q$.
\end{remark}

По схеме рассуждений \cite{s2} можно проверить следующий результат.

\begin{theorem}
\label{mainTheoremDL}
Пусть $V(x_*, x^0) \leqslant R^2$, где $x^0$~--- начальная точка, а $x_*$~--- ближайшая точка минимума к точке $x^0$ в смысле дивергенции Брэгмана. Для предложенного алгоритма 2 после $k$ итераций выполнено следующее неравенство:
\begin{equation*}
f(x^k) - f(x_*) \leqslant \frac{R^2}{A_k} + \frac{1}{A_k}\sum\limits_{i = 0}^{k-1}(\Delta_{i+1}\nbr{x^{i+1} - y^{i+1}} + \delta_{i+1} + \delta)A_{i+1}.
\end{equation*}
Отметим, что вспомогательная задача пункта 3 листинга алгоритма \ref{alg2} решается не более
\begin{equation}\label{Oracle_Estim FGM}
2k + \max\left\{\log_2\frac{2L}{L_0}, \log_2\frac{2\delta}{\delta_0}, \log_2\frac{2\Delta}{\Delta_0}\right\}
\end{equation}
раз.
\end{theorem}

Полученную оценку можно несколько конкретизировать с использованием следующего вспомогательного утверждения,
которое также проверяется по схеме рассуждений \cite{s2} (по сути оно уже доказано в \cite{s2} при $C=1$).

\begin{lemma}
\label{lemma_maxmin_1}
Пусть для последовательности $\alpha_k$ выполнено
$$
\alpha_0 = 0,\,\,\,
A_k = \sum\limits_{i = 0}^{k}\alpha_i,\,\,\,
A_k = L_{k}\alpha_k^2,\,\,\,
$$
где для фиксированного $C \geqslant 1$ верно $L_k \leqslant 2CL$ при всяком $k\geq0$ согласно замечанию \ref{remark_maxmin} выше.
Тогда для любого $k \geqslant 1$ верно следующее неравенство
\begin{align}
\label{lemma_maxmin_1_1}
A_k \geqslant \frac{(k+1)^2}{8CL}.
\end{align}
\end{lemma}

Поэтому из теоремы \ref{mainTheoremDL} вытекает

\begin{corollary}
\label{CorollaryDL}
Пусть $V(x_*, x^0) \leqslant R^2$, где $x^0$~--- начальная точка, а $x_*$~--- ближайшая точка минимума к точке $x^0$ в смысле дивергенции Брэгмана. Для предложенного алгоритма 2 выполнено следующее неравенство:
\begin{equation*}
f(x^k) - f(x_*) \leqslant \frac{8CLR^2}{(k+1)^2} + \frac{1}{A_k}\sum\limits_{i = 0}^{k-1}(\Delta_{i+1}\nbr{x^{i+1} - y^{i+1}} + \delta_{i+1} + \delta)A_{i+1}.
\end{equation*}
\end{corollary}

\section{О применимости предложенной концепции неточной модели функции к негладким задачам за счет введения искусственных неточностей}

На величину $\Delta$ в \eqref{eq44} можно смотреть как на характеристику негладкости функционала $f$. Точнее говоря, $\Delta$ можно понимать, например, как верхнюю оценку суммы диаметров субдифференциалов $f$ в точках негладкости вдоль всевозможных векторных отрезков $[x; y]$ из области определения $f$. Оказывается, в случае известной величины $\Delta< + \infty$ возможно несколько модифицировать алгоритм \ref{alg3}, обеспечив уменьшение $\Delta_{i+1}\|x^{i+1}-x^i\|$ в \eqref{eq44} до любой заданной величины. Это позволит показать оптимальность данного метода в теории нижних оракульных оценок \cite{NemYud_1979} с точностью до логарифмического множителя.

Покажем, как это возможно сделать в предположении, что целевой функционал $f$ допускает ($\delta, \Delta, L$)-модель $\psi$. Заметим, что при этом в настоящем пункте мы всюду требуем $1$-сильную выпуклость прокс-функции в определении \ref{def1}. Будем обозначать через $f^* = f(x_*)$ искомое минимальное значение целевой функции.

Пусть на (k+1)-й итерации алгоритма \ref{alg3} ($k=0,1,\ldots,N-1$) верно неравенство $L\leqslant L_{k+1}\leqslant 2L$ (как показано в п.~2 доказательства теоремы~2.1 из \cite{StonyakinTrudy}, этого можно всегда добиться выполнением не более чем постоянного числа операций п.~2 листинга алгоритма \ref{alg3}). Для каждой итерации алгоритма \ref{alg3} ($k=0,1,\ldots,N-1$) предложим такую процедуру:
\begin{equation}\label{eq12}
\fbox{\begin{minipage}{25em}
Повторяем операции п.~2 $p$ раз, увеличивая $L_{k+1}$ в два раза при неизменной $\Delta_{k+1}\leqslant2\Delta$.
\end{minipage}}
\end{equation}

Процедуру \eqref{eq12} остановим в случае выполнения одного из неравенств:
\begin{equation}\label{eq13}
\Delta_{k+1}\nbr{x^{k+1}-x^k}\leqslant\frac{\varepsilon}{2}
\end{equation}
или
\begin{equation}\label{eq14}
f(x^{k+1})\leqslant f(x^k)+ \psi(x^{k+1}, x^k)+2^{p-1}L\nbr{x^{k+1}-x^k}^2.
\end{equation}

Отметим, что здесь мы полагаем $f$ точно заданной, то есть $f_{\delta}=f$ ($\delta=0$) и $ \psi(y, x) = \langle \nabla f(x), y-x \rangle$, где $\nabla f$~--- некоторый субградиент $f$. В работе \cite{StonyakinTrudy} доказано, что $p$ можно выбрать так, чтобы $2^p \geqslant 1+\frac{16\Delta^2}{\varepsilon L}$ и получен следующий результат.

\begin{theorem}\label{thm2ston}
Пусть $\mu = 0$. Для выхода $\hat{y}^{k+1}$ модифицированного алгоритма \ref{alg3} c учетом дополнительной процедуры \eqref{eq12} неравенство $f(\hat{y}^{k+1})-f^*\leqslant\varepsilon$ будет гарантированно выполнено не более, чем после
\begin{equation}\label{eq16fin}
\left\lceil\frac{4LR^2}{\varepsilon}+\frac{64\Delta^2R^2}{\varepsilon^2}\right\rceil\cdot\left\lceil\log_2 \left(1+\frac{16\Delta^2}{\varepsilon L}\right) \right\rceil
\end{equation}
вычислений субградиента $f$.
\end{theorem}

Пусть теперь $\mu>0$. Если положить $\delta_{i+1}=\delta=0$ для всякого $i=\overline{0,k}$ и $\Delta_{i+1}\nbr{x^{i+1}-x^i}\leqslant\frac{\varepsilon}{2}$, то согласно теореме \ref{th03}
$$f(y^{k+1})-f(x_*)\leqslant 2СL\br{1-\frac{\mu}{2СL}}^{k+1}V(x_*,x^0)+\frac{\varepsilon}{2}.$$

В частности, данное неравенство позволяет при понимании величины $\Delta$ в \eqref{eq44} как характеристики негладкости функционала $f$ применить процедуру, аналогичную \eqref{eq12} (при этом уже необходимо требовать $1$-сильную выпуклость прокс-функции в определении \ref{def02}). Тогда  $\varepsilon$-точность решения задачи минимизации $f$ будет достигаться за
$$
O\left(\frac{1}{\varepsilon}\log_2^2\frac{1}{\varepsilon}\right)
$$
шагов градиентного метода (пункта 2 листинга алгоритма \ref{alg3}). Действительно, если положить $\widehat{\delta}=\delta=0$ (то есть $f_{\delta}=f$) и $V(x_*, x^0)\leqslant R^2$, то для достижения качества решения
$$f(y^{k+1})-f(x_*)\leqslant\varepsilon$$
необходимо выполнить не более
$$\left\lceil\frac{2СL}{\mu}\ln\frac{4СLR^2}{\varepsilon}\right\rceil-1$$
итераций алгоритма \ref{alg3}. Ввиду оценки \eqref{EstimP} c учетом процедуры \eqref{eq12} на каждом шаге итоговое число обращений к (суб)градиенту целевого функционала для алгоритма \ref{alg3} можно оценить как $$\left\lceil\br{\frac{2СL}{\mu}+\frac{32C\Delta^2}{\mu\varepsilon}}\ln\br{\frac{4СLR^2}{\varepsilon}+\frac{64СR^2}{\varepsilon^2}}\right\rceil\left\lceil\log_2\br{1+\frac{16\Delta^2}{\varepsilon L}}\right\rceil-1.$$

Покажем, что даст применение похожей схемы для быстрого градиентного метода на классе выпуклых функционалов $f$, для которых при некоторых $L>0$ и $\Delta>0$ верно
\begin{equation}\label{eq32}
f(y)\leqslant f(x)+ \psi(y,x)+\frac{L}{2}\|y-x\|^2+\Delta\|y-x\|\;\;\forall x,y\in Q.
\end{equation}
В частности, для задач выпуклой негладкой оптимизации $\psi(y, x) = \langle \nabla f(x), y - x \rangle $ для некоторого субградиента $\nabla f$.

Рассмотрим вариант быстрого градиентного метода (алгоритм \ref{alg2}), который использует концепцию $(\delta, \Delta, L)$-модели целевого функционала в запрошенной точке. Применим неадаптивный вариант этого метода с постоянным шагом $L_{k+1}=2^pL$ для некоторого $p\in\mathbb{N}$ и $\delta=\frac{\varepsilon}{2\gamma}$ для некоторых $\varepsilon>0$ и фиксированной постоянной $\gamma>0$. Будем при этом считать $f_{\delta}=f$ (то есть функционал $f$ задан точно) и подбирать $p$ так, чтобы на $(k+1)$-ой итерации $(k=0,1,2,\ldots)$ заведомо выполнялось неравенство
$$f(x^{k+1})\leqslant f(y^{k+1})+\psi(x^{k+1},y^{k+1})+\frac{2^pL}{2}\|x^{k+1}-y^{k+1}\|^2+\frac{\varepsilon}{2\gamma}$$
и после $N$ итераций (это можно проверить аналогично \cite{s2})
\begin{equation}\label{eq33}
f(x^N)-f^* \leqslant \frac{8\cdot2^pLR^2}{(N+1)^2}+\frac{\varepsilon N}{2\gamma}.
\end{equation}
Далее, выберем натуральное $p$ так, чтобы гарантированно выполнялась альтернатива
\begin{equation}\label{eq34}
\left[
\begin{array}{ccc}
\Delta\|x^{k+1}-y^{k+1}\|\leqslant\frac{\varepsilon}{2\gamma},\\
\frac{(2^p-1)L}{2}\|x^{k+1}-y^{k+1}\|^2\geqslant\Delta\|x^{k+1}-y^{k+1}\|.
\end{array}
\right.
\end{equation}
Если $$\Delta\|x^{k+1}-y^{k+1}\|>\frac{\varepsilon}{2\gamma},$$
то
$$\frac{(2^p-1)L}{2}\|x^{k+1}-y^{k+1}\|>\frac{(2^p-1)L\varepsilon}{4\gamma\Delta}.$$
Тогда второе неравенство альтернативы \eqref{eq34} заведомо выполнится при
\begin{equation}\label{EstimP}
2^p>1+\frac{4\gamma\Delta^2}{L\varepsilon}.
\end{equation}
Поэтому положим
\begin{equation}\label{eq35}
p = \left\lceil\log_2\br{1+\frac{4\gamma\Delta^2}{L\varepsilon}}\right\rceil.
\end{equation}
Теперь покажем, каким можно выбрать количество итераций $N$, чтобы гарантированно \eqref{eq32} обеспечивало $f(x^N)-f^*\leqslant\varepsilon.$
Для этого потребуем выполнения неравенств
$$\frac{2^{p+3}LR^2}{(N+1)^2}\leqslant\frac{\varepsilon}{2}\text{ и }\frac{\varepsilon N}{2\gamma}\leqslant\frac{\varepsilon}{2},$$
откуда $\gamma\geqslant N$ и $(N+1)^2\geqslant\frac{2^{p+4}LR^2}{\varepsilon}.$
Для упрощения выкладок усилим последнее требование: $N^2\geqslant\frac{2^{p+4}LR^2}{\varepsilon}$, откуда
$$N^2>\frac{16LR^2}{\varepsilon}\br{1+\frac{4\gamma\Delta^2}{L\varepsilon}}\geqslant\frac{16LR^2}{\varepsilon}+\frac{64N\Delta^2R^2}{\varepsilon^2}.$$
Это означает, что $N$ можно выбирать как $\lceil N_2\rceil$, где $N_2$~--- больший корень уравнения
$$N^2-\frac{64\Delta^2R^2}{\varepsilon}N-\frac{16LR^2}{\varepsilon}=0:$$
$$N_2=\frac{32\Delta^2R^2}{\varepsilon^2}+\sqrt{\br{\frac{32\Delta^2R^2}{\varepsilon^2}}^2+\frac{16LR^2}{\varepsilon}}.$$
Далее, в силу неравенства $\sqrt{a+b}>\sqrt{\frac{a}{2}}+\sqrt{\frac{b}{2}}$ ($a,b>0$) имеем $$N \geqslant N_2\geqslant\frac{(32+16\sqrt{2})\Delta^2R^2}{\varepsilon^2}+\frac{2R\sqrt{2L}}{\sqrt{\varepsilon}}.$$

Таким образом, можно гарантировать достижение $\varepsilon$-точного решения задачи минимизации $f$ за
$$O\br{\sqrt{\frac{L}{\varepsilon}}}+O\br{\frac{\Delta^2}{\varepsilon^2}}$$
итераций быстрого градиентного метода, то есть обоснован некоторый аналог результата о слайдинге \cite{Lan_2016}, но уже целевая функция не обязательно есть сумма гладкого и негладкого слагаемых.

При этом возможно использовать метод с адаптивной настройкой констант $L$, а также $\Delta_{k+1} \leqslant \Delta$. Тогда оценка числа итераций может измениться в постоянное число раз. Также при этом за счёт $p$ дополнительных процедур адаптивного подбора $L_{k+1}$ (при условии $L\leqslant L_{k+1}\leqslant2L$ для всякого $k=0,1,\ldots,N-1$) на каждой итерации метода добавится множитель \eqref{eq35}:
$$
p \geqslant \left\lceil\log_2\br{1+\frac{4N\Delta^2}{L\varepsilon}}\right\rceil \geqslant
\log_2\br{1+\frac{(128+64\sqrt{2})\Delta^4R^2}{L\varepsilon^3} + \frac{8R\Delta^2\sqrt{2}}{\sqrt{L\varepsilon^3}}}.
$$

\begin{theorem}\label{thm3ston}
Для выхода $x^N$ модифицированного алгоритма \ref{alg2} c учетом дополнительной процедуры \eqref{eq12} при $\Delta_{k+1} = \Delta$ на (k+1)-й итерации ($k=0,1,\ldots,N-1$)неравенство $f(x^N)-f^*\leqslant\varepsilon$ будет гарантированно выполнено не более, чем после
\begin{equation}\label{eq16finn}
\left\lceil\frac{(32+16\sqrt{2})\Delta^2R^2}{\varepsilon^2}+\frac{2R\sqrt{2L}}{\sqrt{\varepsilon}}\right\rceil\cdot \left\lceil\log_2\br{1+\frac{(128+64\sqrt{2})\Delta^4R^2}{L\varepsilon^3} + \frac{8R\Delta^2\sqrt{2}}{\sqrt{L\varepsilon^3}}}\right\rceil.
\end{equation}
вычислений субградиента $f$.
\end{theorem}

\begin{remark}
Если не предполагать, что на (k+1)-й итерации ($k=0,1,\ldots,N-1$) модифицированных алгоритмов \ref{alg3} и \ref{alg2} выполнено неравенство $L\leqslant L_{k+1}\leqslant2L$ и предусмотреть полностью адаптивную настройку параметров $L$ и $\Delta$ в методе, то оценки \eqref{eq16fin} и \eqref{eq16finn} могут увеличиться не более, чем в
$$
\left\lceil\max\left\{\frac{2L}{L_0}, \frac{2\Delta}{\Delta_0}\right\}\right\rceil
$$
раз. Отметим также, что логарифмические множители в \eqref{eq16fin} и \eqref{eq16finn} можно опустить, если рассматривать неадаптивные варианты методов \ref{alg3} и \ref{alg2} с фиксированным параметром $L_{k+1} = 2^{p}L$ при подходящем натуральном p.
\end{remark}

\section{Адаптивная настройка на величину погрешностей для вариационных неравенств и седловых задач}
Метод с адаптивной настройкой погрешностей можно предложить и для вариационных неравенств \cite{Stonyakin_1907.13455v4}, а также седловых задач. В данном пункте мы покажем, как это можно сделать в модельной общности. Мы отправляемся от \cite{Stonyakin_2019_uroran-25-2}, где введена концепция неточной модели для вариационных неравенств и седловых задач, но с постоянной неточностью и без адаптивной настройки к величинам погрешностей.

Напомним постановку задачи решения вариационного неравенства, а также необходимые понятия и результаты. Для оператора $G: Q \rightarrow \mathbb{R}^n$, заданного на выпуклом компакте $Q\subset \mathbb{R}^n$ под {\it сильным вариационным неравенством} понимаем неравенство вида
\begin{equation}\label{Old_1}
 \langle G(x_*),\ x_*-x\rangle \leqslant 0.
\end{equation}
Отметим, что в \eqref{Old_1} требуется найти $x_* \in Q$ (это $x_*$ и называется решением ВН) для которого
\begin{equation}\label{Old_2}
 \max_{x\in Q}\langle G(x_*),\ x_*-x\rangle \leqslant 0.
\end{equation}
Для монотонного оператора поля $G$ можно рассматривать {\it слабые вариационные неравенства}
\begin{equation}\label{Old_11}
 \langle G(x),\ x_*-x\rangle \leqslant 0.
\end{equation}
Обычно в \eqref{Old_11} требуется найти $x_* \in Q$, для которого \eqref{Old_11} верно при всех $x \in Q$.

Предложен аналог концепции неточной модели целевой функции в оптимизации для вариационных неравенств и седловых задач. Для удобства будем рассматривать задачу нахождения решения $x_*\in Q$ абстрактной задачи равновесного программирования
\begin{equation}\label{eq13}
\psi(x,x_*)\geqslant 0 \quad \forall x\in Q
\end{equation}
для некоторого выпуклого компакта $Q\subset\mathbb{R}^n$, а также функционала $\psi:Q\times Q\rightarrow\mathbb{R}$. Если предположить абстрактную монотонность функционала $\psi$:
\begin{equation}\label{eq14}
\psi(x,y)+\psi(y,x)\leqslant0\;\;\forall x,y\in Q,
\end{equation}
то всякое решение \eqref{eq13} будет также и решением двойственной задачи равновесия
\begin{equation}\label{eq115}
\psi(x_*,x)\leqslant 0 \quad \forall x\in Q.
\end{equation}
В общем случае сделаем предположение о существовании решения $x_*$ задачи \eqref{eq13}. Приведем пару примеров задания $\psi$, для которых данное условие заведомо выполнено.

\begin{example}
Если для некоторого оператора $G:Q\rightarrow\mathbb{R}^n$ положить
\begin{equation}\label{eq16}
\psi(x,y)=\langle G(y),x-y\rangle\;\;\forall x,y\in Q,
\end{equation}
то \eqref{eq13} и \eqref{eq115} будут равносильны соответственно стандартным сильному и слабому вариационному неравенству с оператором $G$.
\end{example}

\begin{example}\label{eq: Mixed_VI}
Для некоторого оператора $G:Q\rightarrow\mathbb{R}^n$ и выпуклого функционала $h:Q\rightarrow\mathbb{R}^n$ простой структуры выбор функционала
\begin{equation}\label{eq17}
\psi(x,y)=\langle G(y),x-y\rangle+h(x)-h(y)
\end{equation}
приводит к {\it смешанному вариационному неравенству}
\begin{equation}\label{eq18}
\langle G(y),y-x\rangle+h(y)-h(x)\leqslant0,
\end{equation}
которое в случае монотонности оператора $G$ влечет
\begin{equation}\label{eq19}
\langle G(x),y-x\rangle+h(y)-h(x)\leqslant0.
\end{equation}
\end{example}

Концепцию $(\delta, \Delta, L)$-модели для выделенного выше класса задач \eqref{eq13} --- \eqref{eq115} возможно ввести следующим образом.

\begin{definition}\label{Def_Model_VI}
Будем говорить, что функционал $\psi$ допускает $(\delta, \Delta, L)$-модель $\psi_{\delta} (x, y)$ при некоторых фиксированных значениях параметров $L,\;\delta,\;\Delta>0$ в произвольной точке $y$ относительно дивергенции Брэгмана $V(y,x)$, если для произвольных $x, y, z \in Q$ верны:
\begin{enumerate}
\item[(i)] $\psi(x, y) \leqslant \psi_{\delta}(x, y) + \delta$;
\item[(ii)] функционал $\psi_{\delta} (x, y)$ выпуклый по первой переменной и $\psi_{\delta}(x,x)=0$;
\item[(iii)] ({\it абстрактная $\delta$-монотонность})
\begin{equation}\label{eq:abstr_monot}
\psi_{\delta}(x,y)+\psi_{\delta}(y,x)\leqslant \delta;
\end{equation}
\item[(iv)] ({\it обобщённая относительная гладкость})
\begin{equation}\label{eq20}
\psi_{\delta}(x,y)\leqslant\psi_{\delta}(x,z)+\psi_{\delta}(z,y)+ LV(x,z)+ LV(z,y)+\Delta\nbr{y-z} + \delta.
\end{equation}
\end{enumerate}
\end{definition}

Естественно возникает идея обобщить этот метод на абстрактные задачи \eqref{eq13} и \eqref{eq115} в предположениях их разрешимости, а также (i)--(iv). При этом будем учитывать погрешность $\delta$ в \eqref{eq20}, а также погрешность $\tilde{\delta}$ решения вспомогательных задач на итерациях согласно одному из достаточно известных в алгоритмической оптимизации подходов:
\begin{equation}\label{eq22}
x:=\argmin\limits_{y\in Q}^{\tilde{\delta}}\varphi(y),\text{ если }\langle\nabla\varphi(x),x-y\rangle\leqslant\tilde{\delta}.
\end{equation}
Во многих случаях можно считать \cite{s2}, что предполагается отрешивание вспомогательных задачи минимизации с некоторой точностью $\tilde{\delta}$ по функции.

К выделенному классу задач \eqref{eq13} и \eqref{eq115}, допускающих существование ($\delta, \Delta, L$)-модели, возможно применить аналог проксимального зеркального метода А.С. Немировского с адаптивным выбором шага.
Опишем $(N+1)$-ую итерацию этого метода ($N=0,1,2,\ldots$), выбрав начальное приближение
$x^0=\argmin\limits_{x\in Q}d(x),$
зафиксировав точность $\varepsilon>0$, а также некоторые константы $L_0\leqslant2L$ и $\Delta_0\leqslant2\Delta$.
\begin{algorithm}[h!]\label{alg:SIGM}
\caption{Адаптивный метод для концепции ($\delta, \Delta, L$)-модели.}
\begin{itemize}
\item[1.] $N:=N+1$,\;$L_{N+1}:=\frac{L_N}{2},\;\Delta_{N+1}:=\frac{\Delta_N}{2}$.
\item[2.] Вычисляем:\\
$y^{N+1}:=\argmin\limits_{x\in Q}^{\tilde{\delta}}\fbr{\psi_{\delta}(x, x^N)+L_{N+1}V(x,x^N)}$,\\
$x^{N+1}:=\argmin\limits_{x\in Q}^{\tilde{\delta}}\fbr{\psi_{\delta}(x, y^{N+1})+L_{N+1}V(x,x^N)}$\\
до тех пор, пока не будет выполнено:
\begin{equation}\label{eq23}
\begin{split}
\psi_{\delta}(x^{N+1}, x^N)\leqslant\psi_{\delta}(y^{N+1}, x^N)+\psi_{\delta}(x^{N+1}, y^{N+1})+\\
+L_{N+1}V(y^{N+1}, x^N)+L_{N+1}V(y^{N+1}, x^{N+1})+\Delta_{N+1}\nbr{y^{N+1}-x^{N+1}}+\delta.
\end{split}
\end{equation}
\item[3.] \textbf{Если} \eqref{eq23} не выполнено, \textbf{то} $L_{N+1}:=2L_{N+1}$, $\Delta_{N+1}:=2\Delta_{N+1}$ и повторяем п. 2.
\item[4.] \textbf{Иначе} переход к п.~1.
\item[5.] Критерий остановки метода:
\begin{equation}\label{eq24}
\sum\limits_{k=0}^{N-1}\frac{1}{L_{k+1}}\geqslant\frac{\max_{x \in Q} V(x,x^0)}{\varepsilon}.
\end{equation}
\end{itemize}
\end{algorithm}

Для краткости будем всюду далее обозначать
\begin{equation}\label{eq25}
S_N:=\sum\limits_{k=0}^{N-1}\frac{1}{L_{k+1}}.
\end{equation}

Справедлива следующая
\begin{theorem}\label{fedyorth1}
После остановки рассматриваемого метода для всякого $x \in Q$ будет заведомо выполнено неравенство:
\begin{equation}\label{eq26}
-\frac{1}{S_N}\sum\limits_{k=0}^{N-1}\frac{\psi_{\delta}(x,y^{k+1})}{L_{k+1}}\leqslant \varepsilon+2\tilde{\delta}+\delta+\frac{1}{S_N}\sum\limits_{k=0}^{N-1}\frac{\Delta_{k+1}\nbr{y^{k+1}-x^{k+1}}}{L_{k+1}},
\end{equation}
a также
\begin{equation}\label{eq27}
\psi(\tilde{y},x)\leqslant \varepsilon+2\tilde{\delta}+3\delta + \frac{1}{S_N}\sum\limits_{k=0}^{N-1}\frac{\Delta_{k+1}\nbr{y^{k+1}-x^{k+1}}}{L_{k+1}}
\end{equation}
при
\begin{equation}\label{eq28}
\tilde{y}:=\frac{1}{S_N}\sum\limits_{k=0}^{N-1}\frac{y^{k+1}}{L_{k+1}}.
\end{equation}
\end{theorem}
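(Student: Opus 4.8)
The plan is to reduce the whole statement to a single per-iteration estimate and then average it with the weights $1/L_{k+1}$ that define $S_N$ in \eqref{eq25}. Concretely, I would first establish the one-step bound
\[
-\psi_\delta(x,y^{k+1}) \leqslant L_{k+1}\br{V(x,x^k)-V(x,x^{k+1})} + \Delta_{k+1}\nbr{y^{k+1}-x^{k+1}} + 2\tilde\delta + \delta
\]
for every $x\in Q$ and every $k=0,\dots,N-1$. Dividing by $L_{k+1}$, summing and dividing by $S_N$ makes the Bregman part telescope into $\frac{1}{S_N}\br{V(x,x^0)-V(x,x^N)}\leqslant \frac{1}{S_N}\max_{z\in Q}V(z,x^0)$, which is at most $\varepsilon$ precisely by the exit rule \eqref{eq24} (together with $V\geqslant0$); the constant terms contribute $\frac{1}{S_N}\sum_k\frac{2\tilde\delta+\delta}{L_{k+1}}=2\tilde\delta+\delta$, and the $\Delta$-terms reproduce the residual sum. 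This is exactly \eqref{eq26}.

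For the one-step bound I would use the two inexact optimality conditions coming from \eqref{eq22} for the two prox-subproblems defining $y^{k+1}$ (operator $\psi_\delta(\cdot,x^k)$) and $x^{k+1}$ (operator $\psi_\delta(\cdot,y^{k+1})$), both centred at $x^k$. Applying the subgradient inequality for the convex first argument of $\psi_\delta$ (property (ii) of Definition~\ref{Def_Model_VI}) and rewriting the mirror-map increments through the three-point identity $\abr{\nabla d(b)-\nabla d(c),\,b-a}=V(a,b)+V(b,c)-V(a,c)$, testing the first condition at $u=x^{k+1}$ and the second at $u=x$, I obtain two inequalities whose right-hand sides carry $L_{k+1}$ in front of Bregman divergences plus an additive $\tilde\delta$ each. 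The crucial third ingredient is the adaptive stopping rule \eqref{eq23}, which I read as an upper bound on the cross-term $\psi_\delta(x^{k+1},x^k)-\psi_\delta(y^{k+1},x^k)-\psi_\delta(x^{k+1},y^{k+1})$; this is exactly the quantity that would otherwise require Lipschitz continuity of the operator, and the relative-smoothness model supplies it via property (iv). Adding these three inequalities, all the $\psi_\delta$-values cancel to leave $-\psi_\delta(x,y^{k+1})$ on the left, and all Bregman divergences cancel except $L_{k+1}\br{V(x,x^k)-V(x,x^{k+1})}$, which gives the one-step bound.

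Finally I would deduce \eqref{eq27} from \eqref{eq26} in three elementary moves. First, the $\delta$-monotonicity \eqref{eq:abstr_monot} gives $\psi_\delta(y^{k+1},x)\leqslant \delta-\psi_\delta(x,y^{k+1})$, so substituting $-\psi_\delta(x,y^{k+1})\geqslant\psi_\delta(y^{k+1},x)-\delta$ into the averaged bound costs one extra $\delta$. Second, since the weights $1/(L_{k+1}S_N)$ sum to one and $\psi_\delta(\cdot,x)$ is convex, Jensen's inequality applied to the average $\tilde y$ of \eqref{eq28} yields $\psi_\delta(\tilde y,x)\leqslant \frac{1}{S_N}\sum_k\frac{\psi_\delta(y^{k+1},x)}{L_{k+1}}$. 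Third, passing from $\psi_\delta$ to $\psi$ by property (i) adds a further $\delta$. Adding these two $\delta$-losses to the $\delta$ already present in \eqref{eq26} produces the constant $3\delta$ of \eqref{eq27}.

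The main obstacle I anticipate is the bookkeeping of the Bregman terms in the one-step estimate: since $V$ is not symmetric, the cancellation after adding the three inequalities works only if the divergence appearing in \eqref{eq23} is matched against the one generated by the three-point identity in the $x^{k+1}$-step, so that exactly the telescoping divergence $V(x,x^{k+1})$ survives while the auxiliary divergences built from $x^k,y^{k+1},x^{k+1}$ drop out. Verifying this matching (and keeping the additive $\tilde\delta$ and $\delta$ terms correctly counted) is the delicate point; everything afterwards is routine weighted averaging together with $V\geqslant0$ and $V(x,x^N)\geqslant0$.
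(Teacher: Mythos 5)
Your proof is correct, and it is essentially the argument the paper relies on: in fact the paper states Theorem \ref{fedyorth1} without printing a proof (it is the standard Mirror-Prox-type analysis from the works it cites), so your reconstruction --- the two inexact optimality conditions from \eqref{eq22} tested at $u=x^{k+1}$ and $u=x$, combined with the stopping rule \eqref{eq23} to kill the cross-term, telescoping of $L_{k+1}\br{V(x,x^k)-V(x,x^{k+1})}$, the exit rule \eqref{eq24} to bound the telescoped part by $\varepsilon$, and then $\delta$-monotonicity \eqref{eq:abstr_monot}, Jensen for the convex first argument, and property (i) to pass from \eqref{eq26} to \eqref{eq27} --- supplies exactly the omitted details, with the constants $2\tilde{\delta}+\delta$ and $3\delta$ coming out right. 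Concerning the delicate point you flagged: the cancellation indeed requires reading the second divergence in \eqref{eq23} as $V(x^{N+1},y^{N+1})$, i.e.\ with the orientation produced by the three-point identity in the $x^{k+1}$-step and by property (iv) of Definition \ref{Def_Model_VI}, whereas \eqref{eq23} literally prints $V(y^{N+1},x^{N+1})$; this is an inconsistency in the paper's formula (it also disagrees with \eqref{eq20} in the ordering of the $\Delta$-term), not a gap in your argument.
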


\begin{remark}
Для обычных слабых вариационных неравенств \eqref{Old_11} неравенство \eqref{eq27} можно заменить на
\begin{equation}\label{eq14krit}
\max_{x\in Q} \left \langle G(x), \tilde{y} -x \right\rangle \leqslant \varepsilon + 2 \widetilde{\delta} + 3\delta + \frac{1}{S_N}\sum\limits_{k=0}^{N-1}\frac{\Delta_{k+1}\nbr{y^{k+1}-x^{k+1}}}{L_{k+1}}.
\end{equation}
\end{remark}

Аналогично теореме \ref{thm2ston} можно сформулировать результат о том, что при $\gamma=\delta=\widetilde{\delta}=0$ процедурой типа \eqref{eq12} можно добиться выполнения $\max\limits_{x\in Q}\abr{G(x),\widetilde{y}-x}\leqslant\varepsilon$ за $$\left\lceil\frac{4LR^2}{\varepsilon}+\frac{64\Delta^2}{\varepsilon^2}\right\rceil\left\lceil\log\br{1+\frac{16\Delta^2}{\varepsilon L}}\right\rceil$$
обращения к оракулу для $G$.

Можно ввести также аналог концепции $(\delta, \Delta, L)$-модели для седловых задач. Напомним, что постановка седловой задачи предполагает, что для выпуклого по $u$ и вогнутого по $v$ функционала $f(u,v):\mathbb{R}^{n_1+n_2}\rightarrow\mathbb{R}$ ($u\in Q_1\subset\mathbb{R}^{n_1}$ и $v\in Q_2\subset\mathbb{R}^{n_2}$) требуется найти $(u_*,v_*)$ такую, что:
\begin{equation}\label{eq31}
f(u_*,v)\leqslant f(u_*,v_*)\leqslant f(u,v_*)
\end{equation}
для произвольных $u\in Q_1$ и $v\in Q_2$. Мы считаем $Q_1$ и $Q_2$ выпуклыми компактами в пространствах $\mathbb{R}^{n_1}$ и $\mathbb{R}^{n_2}$ и поэтому $Q=Q_1\times Q_2\subset\mathbb{R}^{n_1+n_2}$ также есть выпуклый компакт. Для всякого $x=(u,v)\in Q$ будем полагать, что
$||x||=\sqrt{||u||_1^2+||v||_2^2},$
где $||\cdot||_1$ и $||\cdot||_2$~--- нормы в пространствах $\mathbb{R}^{n_1}$ и $\mathbb{R}^{n_2}$). Условимся обозначать $x=(u_x,v_x),\;y=(u_y,v_y)\in Q$.

Хорошо известно, что для достаточно гладкой функции $f$ по $u$ и $v$ задача \eqref{eq31} сводится к вариационному неравенству с оператором
\begin{equation}\label{eq32}
G(x)=
\begin{pmatrix}
f_u'(u_x,v_x)\\
-f_v'(u_x,v_x)
\end{pmatrix}.
\end{equation}

Предложим некоторую вариацию концепции $(\delta, \Delta, L)$-модели для вариационных неравенств, но уже на более узком классе седловых задач.

\begin{definition}\label{Def_Sedlo}
Будем говорить, что для некоторой постоянной $\delta > 0$ функция $\psi_{\delta}(x,y)\;(\psi:\mathbb{R}^{n_1+n_2}\times\mathbb{R}^{n_1\times n_2}\rightarrow\mathbb{R})$ есть {\bf $(\delta, \Delta, L)$-модель} для седловой задачи \eqref{eq31}, если для некоторого функционала $\psi_{\delta}$ при произвольных $x, y, z \in Q$ выполнены предположения (i) -- (iv) определения \ref{Def_Model_VI}, а также справедливо неравенство:
\begin{equation}\label{eq33}
f(u_y,v_x)-f(u_x,v_y)\leqslant-\psi_{\delta}(x,y) + \delta \quad \forall x,y\in Q.
\end{equation}
\end{definition}

Из теоремы \ref{fedyorth1} вытекает
\begin{theorem}\label{th_sedlo}
Если для седловой задачи \eqref{eq31} существует $(\delta, \Delta, L)$-модель $\psi_{\delta}(x,y)$, то после остановки алгоритма 3 получаем точку
\begin{equation}\label{eq36}
\tilde{y}=(u_{\tilde{y}},v_{\tilde{y}}):=(\tilde{u},\tilde{v}):=\frac{1}{S_N}\sum\limits_{k=0}^{N_1}\frac{y^{k+1}}{L_{k+1}},
\end{equation}
для которой верна оценка величины-качества решения седловой задачи:
\begin{equation}\label{eq37}
\max_{v\in Q_2}f(\tilde{u},v)-\min_{u\in Q_1}f(u,\tilde{v})\leqslant \varepsilon + \frac{1}{S_N}\sum\limits_{k=0}^{N-1}\frac{\Delta_{k+1}\nbr{y^{k+1}-x^{k+1}}}{L_{k+1}} + 2\tilde{\delta}+2\delta.
\end{equation}
\end{theorem}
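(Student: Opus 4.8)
The plan is to reduce the saddle-point gap $\max_{v\in Q_2}f(\tilde u,v)-\min_{u\in Q_1}f(u,\tilde v)$ to the variational-inequality estimate \eqref{eq26} already supplied by Theorem \ref{fedyorth1}, using the extra inequality \eqref{eq33} of Definition \ref{Def_Sedlo} as the bridge between the abstract model $\psi_\delta$ and the values of $f$, and exploiting both the convex--concave structure of $f$ and the fact that $\tilde y$ is a convex combination of the iterates $y^{k+1}$. First I would fix an arbitrary $x=(u_x,v_x)\in Q=Q_1\times Q_2$ and apply \eqref{eq33} with $y:=y^{k+1}$ for each $k=0,\dots,N-1$, getting
$$ f(u_{y^{k+1}},v_x)-f(u_x,v_{y^{k+1}})\leqslant -\psi_\delta(x,y^{k+1})+\delta. $$

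Introducing the weights $\lambda_k:=1/(L_{k+1}S_N)\geqslant0$, which by the definition of $S_N$ in \eqref{eq25} satisfy $\sum_{k=0}^{N-1}\lambda_k=1$, I would multiply the $k$-th inequality by $\lambda_k$ and sum over $k$. The right-hand side becomes $-\frac1{S_N}\sum_{k=0}^{N-1}\psi_\delta(x,y^{k+1})/L_{k+1}+\delta$, and by the estimate \eqref{eq26} of Theorem \ref{fedyorth1} this is bounded above by
$$ \varepsilon+2\tilde\delta+2\delta+\frac1{S_N}\sum_{k=0}^{N-1}\frac{\Delta_{k+1}\nbr{y^{k+1}-x^{k+1}}}{L_{k+1}}, $$
a quantity independent of the free variable $x$ (the only $x$-dependence in \eqref{eq26} sits on its left-hand side, while the residual sum involves the iterates $x^{k+1}$).

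It then remains to lower-bound the averaged left-hand side by $f(\tilde u,v_x)-f(u_x,\tilde v)$. Here I invoke the convex--concave hypothesis on $f$: since $\tilde u=\sum_{k}\lambda_k u_{y^{k+1}}$ and $\tilde v=\sum_{k}\lambda_k v_{y^{k+1}}$, convexity in the first argument gives $f(\tilde u,v_x)\leqslant\sum_{k}\lambda_k f(u_{y^{k+1}},v_x)$ by Jensen's inequality, and concavity in the second gives $f(u_x,\tilde v)\geqslant\sum_{k}\lambda_k f(u_x,v_{y^{k+1}})$. Subtracting yields $f(\tilde u,v_x)-f(u_x,\tilde v)\leqslant\sum_{k}\lambda_k\bigl(f(u_{y^{k+1}},v_x)-f(u_x,v_{y^{k+1}})\bigr)$, so combining with the previous bound I obtain, for every $(u_x,v_x)\in Q_1\times Q_2$,
$$ f(\tilde u,v_x)-f(u_x,\tilde v)\leqslant\varepsilon+2\tilde\delta+2\delta+\frac1{S_N}\sum_{k=0}^{N-1}\frac{\Delta_{k+1}\nbr{y^{k+1}-x^{k+1}}}{L_{k+1}}. $$
Since the right-hand side does not depend on $x$ and the left-hand side separates into a function of $v_x$ minus a function of $u_x$, I would finally take the supremum over $v_x\in Q_2$ and the infimum over $u_x\in Q_1$ independently, turning the left-hand side into $\max_{v\in Q_2}f(\tilde u,v)-\min_{u\in Q_1}f(u,\tilde v)$ and giving \eqref{eq37}.

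The main obstacle I anticipate is the coupling in the middle step: Theorem \ref{fedyorth1} controls only the averaged combination $\frac1{S_N}\sum_{k}\psi_\delta(x,y^{k+1})/L_{k+1}$ rather than a single $\psi_\delta(x,\tilde y)$, and $\psi_\delta$ need not be convex in its second argument, so one cannot pass to $\tilde y$ directly inside $\psi_\delta$. The device that resolves this is precisely to apply \eqref{eq33} term-by-term \emph{before} averaging and to transfer the averaging onto $f$, where convexity and concavity are available; the remaining points requiring care are that the weights $\lambda_k$ sum to one and that the separable $\max/\min$ manipulation is legitimate because $u_x$ and $v_x$ range freely and the bound is uniform in $x$.
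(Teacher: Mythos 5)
Your proposal is correct and is essentially the paper's own (implicit) argument: the paper states Theorem \ref{th_sedlo} as an immediate consequence of Theorem \ref{fedyorth1}, and the intended derivation is exactly your chain --- apply \eqref{eq33} term-by-term with $y:=y^{k+1}$, average with the weights $\lambda_k=\frac{1}{L_{k+1}S_N}$, transfer the averaging onto $f$ via convexity in $u$ and concavity in $v$, invoke \eqref{eq26}, and then take $\max_{v\in Q_2}$ and $\min_{u\in Q_1}$ of the $x$-uniform bound. Your bookkeeping of the error terms (one $\delta$ from \eqref{eq33} plus one $\delta$ from \eqref{eq26}, giving $2\delta$ rather than the $3\delta$ of \eqref{eq27}) matches the constant in \eqref{eq37}, confirming that the route through \eqref{eq26} rather than \eqref{eq27} is the right one.
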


\section{Aдаптивные алгоритмы зеркального спуска для негладких задач выпуклой оптимизации с функциональными ограничениями}

В данном разделе мы рассмотрим некоторые методы для задачи минимизации выпуклой негладкой функции $f(x)\rightarrow \min $ с липшицевым выпуклым функциональным ограничением $g(x)\leqslant0$. Далее будем полагать, что $x_*$~--- одно из решений такой задачи. Мы будем рассматривать для выделенного класса задач методы, аналогичные схемам с переключениями, которые восходят к пионерским работам \cite{Polyak_1967,Shor_1967}. Недавно в \cite{Bayandina_2018_lectures} были предложены методы зеркального спуска такого типа для условных задач с адаптивным выбором шага и критериев остановки. Более того, оказалось \cite{Ivanova_1911.07354v2,Stonyakin_1908.00218v4}, что для указанных процедур оптимальную на классе выпуклых липшицевых целевых функционалов оценку скорости сходимости $O(\varepsilon^{-2})$ можно распространить и на класс квазивыпуклых гёльдеровых целевых функционалов. Рассмотрены и приложения к задаче оптимизации компьютерной сети для логарифмического целевого функционала \cite{Ivanova_1911.07354v2}. По итогам проведённых экспериментов выяснилось, что как правило, быстрее всех из рассмотренных схем \cite{Bayandina_2018_lectures,Ivanova_1911.07354v2,Stonyakin_1908.00218v4} работает следующий метод.
\begin{algorithm}
\caption{Адаптивный зеркальный спуск}
\label{alg4}
\begin{algorithmic}[1]
\REQUIRE $\varepsilon>0,\;\Theta_0:d(x_*)\leqslant\Theta_0^2$
\STATE $x^0=argmin_{x\in Q}\,d(x)$
\STATE $I=:\emptyset$
\STATE $k\leftarrow0$
\REPEAT
  \IF{$g(x^k)\leqslant\varepsilon||\nabla g(x^k)||_*$}
    \STATE $h_k\leftarrow\frac{\varepsilon}{||\nabla f(x^k)||_{*}^2}$
    \STATE $x^{k+1}\leftarrow Mirr_{x^k}(h_k\nabla f(x^k))\;\text{// \emph{"продуктивные шаги"}}$
    \STATE $k\rightarrow I$
  \ELSE
    \STATE $h_k\leftarrow\frac{\varepsilon}{||\nabla g(x^k)||_{*}}$
    \STATE $x^{k+1}\leftarrow Mirr_{x^k}(h_k\nabla g(x^k))\;\text{// \emph{"непродуктивные шаги"}}$
  \ENDIF
  \STATE $k\leftarrow k+1$
\UNTIL{
  \begin{equation}\label{eq1}
  \frac{2\Theta_0^2}{\varepsilon^2} \leqslant\sum\limits_{k\in I}\frac{1}{||\nabla f(x^k)||_{*}^2}+|J|,
  \end{equation}
  }
где $|J|$~--- количество непродуктивных шагов (мы обозначим через $|I|$ количество продуктивных шагов, то есть $|I|+|J|=N$).
\ENSURE{$\widehat{x}=\frac{1}{\sum\limits_{k\in I}h_k}\sum\limits_{k\in I}h_kx^k$.}
\end{algorithmic}
\end{algorithm}

Обозначим через $\varepsilon>0$ фиксированную точность, $x_0$~--- начальное приближение такое, что для некоторого $\Theta_0>0$ верно неравенство $V(x_*,x^0)\leqslant \Theta_0^2$. Пусть
\begin{equation}
|g(x)-g(y)|\leqslant M_g||x-y||\;\forall x,y\in Q.
\end{equation}
Тогда справедлив следующий результат об оценке качества найденного решения предложенного метода (в \cite{Ivanova_1911.07354v2} имеется полное доказательство в случае евклидовой прокс-структуры, для произвольной прокс-структуры это доказательство аналогично).

\begin{theorem}\label{th1}
После остановки предложенного алгоритма \ref{alg4} справедливы неравенства:
$$f(\widehat{x})-f(x_*)\leqslant\varepsilon\text{ и } g(\widehat{x})\leqslant \varepsilon M_g.$$
\end{theorem}

На базе теоремы \ref{th1} в предположении липшицевости целевого функционала
\begin{equation}
|f(x)-f(y)|\leqslant M_f||x-y||
\end{equation}
можно оценить количество итераций, необходимых для выполнения критерия остановки \eqref{eq1}. Ясно, что $\forall k\in I\;||\nabla f(x^k)||_*\leqslant M_f$ и поэтому
$$|J|+\sum\limits_{k\in I}\frac{1}{||\nabla f(x^k)||_*^2}\geqslant|J|+\frac{|I|}{M_f^2}\geqslant(|I|+|J|)\frac{1}{\max\{1,M_f^2\}}.$$
Это означает, что при
$$N\geqslant\frac{2\Theta_0^2\max\{1,M_f^2\}}{\varepsilon^2}$$
критерий остановки \eqref{eq1} заведомо выполнен, то есть искомая точность достигается за $O\br{\frac{1}{\varepsilon^2}}$ итераций.

По аналогии с (\cite{Bayandina_2018_lectures}, теорема 3.2) можно проверить следующий результат, означающий прямо-двойственность алгоритма \ref{alg4} в случае ограниченного множества $Q$.

\begin{theorem}
Пусть $Q$ ограничено и $\displaystyle\max_{x\in Q}d(x)\leqslant\Theta_{0}^{2}$. Тогда если применить алгоритм \ref{alg4} к задаче
$$\varphi(\lambda)=\min_{x\in Q}\left\{f(x)+\sum\limits_{i=1}^{m}\lambda_{i}g_{i}(x)\right\}\rightarrow\max_{\lambda_{i}\geqslant0,\,\,i=1,\ldots, m},$$
то после остановки для найденной пары $(\widehat{x}^{k},\widehat{\lambda}^{k})$ будет верно:
$$f(\widehat{x}^{k})-\varphi(\widehat{\lambda}^{k})\leqslant\varepsilon,\quad g(\widehat{x}^{k})\leqslant M_{g}\varepsilon.$$
\end{theorem}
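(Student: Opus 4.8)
The plan is to treat the two claimed estimates separately. The feasibility bound $g(\widehat{x}^k)\leqslant M_g\varepsilon$ is already contained in Theorem \ref{th1}, since the reconstructed point $\widehat{x}^k=\widehat{x}$ is built from exactly the same productive iterates; so the only genuinely new part is the duality-gap estimate $f(\widehat{x}^k)-\varphi(\widehat{\lambda}^k)\leqslant\varepsilon$, and for this I must first specify how the dual iterate $\widehat{\lambda}^k$ is reconstructed. Following the argument of \cite{Bayandina_2018_lectures}, the natural choice is to let each coordinate $\widehat{\lambda}_i$ accumulate the step sizes of those non-productive steps at which the $i$-th constraint was the (maximally) violated one, normalized by $\sum_{k\in I}h_k$; i.e. $\widehat{\lambda}_i=\bigl(\sum_{k\in I}h_k\bigr)^{-1}\sum_{k\in J:\,i(k)=i}h_k$, where $i(k)$ is the index chosen on the non-productive step $k$ (with $g=\max_i g_i$).

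First I would write the standard three-point mirror-descent inequality for a single step,
$$h_k\langle\nabla\phi_k(x^k),x^k-x\rangle\leqslant V(x,x^k)-V(x,x^{k+1})+\tfrac12 h_k^2\|\nabla\phi_k(x^k)\|_*^2,$$
where $\phi_k=f$ on productive steps $k\in I$ and $\phi_k=g_{i(k)}$ on non-productive steps $k\in J$. Summing over $k=0,\dots,N-1$, telescoping the $V$-terms, dropping $-V(x,x^N)\leqslant0$, and invoking convexity of $f$ and of each $g_i$ (so that $\langle\nabla\phi_k(x^k),x^k-x\rangle\geqslant\phi_k(x^k)-\phi_k(x)$), I obtain a single inequality in which the productive terms assemble $\sum_{k\in I}h_k f(x^k)$ and the non-productive terms, after inserting the definition of $\widehat{\lambda}$, assemble $\bigl(\sum_{k\in I}h_k\bigr)\sum_i\widehat{\lambda}_i g_i(x)$. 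Writing $\Phi(x,\lambda)=f(x)+\sum_i\lambda_i g_i(x)$ for the Lagrangian, this reads, for every $x\in Q$,
$$\sum_{k\in I}h_k f(x^k)-\Bigl(\sum_{k\in I}h_k\Bigr)\Phi(x,\widehat{\lambda})+\sum_{k\in J}h_k g_{i(k)}(x^k)\leqslant V(x,x^0)+\tfrac{\varepsilon^2}{2}\Bigl(\sum_{k\in I}\|\nabla f(x^k)\|_*^{-2}+|J|\Bigr).$$

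The decisive step is to make this inequality \emph{uniform in $x$}. This is exactly where boundedness enters: since $x^0=\argmin_{x\in Q}d(x)$, optimality of $x^0$ gives $V(x,x^0)\leqslant d(x)\leqslant\max_{x\in Q}d(x)\leqslant\Theta_0^2$ for all $x\in Q$, so I may replace $V(x,x^0)$ by $\Theta_0^2$ and then minimize the left-hand side over $x$, turning $\Phi(x,\widehat{\lambda})$ into $\varphi(\widehat{\lambda})=\min_{x\in Q}\Phi(x,\widehat{\lambda})$. Dividing by $\sum_{k\in I}h_k$, using convexity of $f$ to bound $f(\widehat{x}^k)\leqslant\bigl(\sum_{k\in I}h_k\bigr)^{-1}\sum_{k\in I}h_k f(x^k)$, the non-productive lower bound $h_k g_{i(k)}(x^k)=\varepsilon g_{i(k)}(x^k)/\|\nabla g_{i(k)}(x^k)\|_*>\varepsilon^2$, and $\sum_{k\in I}h_k=\varepsilon\sum_{k\in I}\|\nabla f(x^k)\|_*^{-2}$, I reduce everything to the scalar estimate
$$f(\widehat{x}^k)-\varphi(\widehat{\lambda}^k)\leqslant\frac{\Theta_0^2-\tfrac{\varepsilon^2}{2}|J|}{\sum_{k\in I}h_k}+\frac{\varepsilon}{2}.$$

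It then remains only to verify that the first summand is at most $\varepsilon/2$. Rewriting the stopping criterion \eqref{eq1} as $\tfrac{\varepsilon}{2}\sum_{k\in I}h_k+\tfrac{\varepsilon^2}{2}|J|\geqslant\Theta_0^2$, i.e. $\Theta_0^2-\tfrac{\varepsilon^2}{2}|J|\leqslant\tfrac{\varepsilon}{2}\sum_{k\in I}h_k$, delivers exactly the needed bound and yields $f(\widehat{x}^k)-\varphi(\widehat{\lambda}^k)\leqslant\varepsilon$. I expect the main obstacle to be bookkeeping rather than conceptual: one must keep the productive/non-productive splitting consistent, correctly convert the non-productive subgradient sums into the Lagrangian via $\widehat{\lambda}$, and above all justify the uniform-in-$x$ bound on $V(x,x^0)$. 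It is precisely the boundedness of $Q$ (through $\max_{x\in Q}d(x)\leqslant\Theta_0^2$) that licenses passing from $\Phi(x,\widehat{\lambda})$ to the dual value $\varphi(\widehat{\lambda})$; without it the argument of Theorem \ref{th1} (which only needed $V(x_*,x^0)\leqslant\Theta_0^2$ for the single point $x_*$) would not extend to the dual estimate.
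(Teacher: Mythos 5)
Your proof is correct and is essentially the argument the paper intends: the paper states this theorem without proof, deferring to Theorem 3.2 of \cite{Bayandina_2018_lectures}, and your reconstruction --- dual weights $\widehat{\lambda}_i$ accumulated over the non-productive steps, the summed mirror-descent inequality made uniform in $x$ via $V(x,x^0)\leqslant d(x)\leqslant\Theta_0^2$ (which is exactly where boundedness of $Q$ enters), and the stopping rule \eqref{eq1} rewritten as $\Theta_0^2-\tfrac{\varepsilon^2}{2}|J|\leqslant\tfrac{\varepsilon}{2}\sum_{k\in I}h_k$ --- is precisely that standard primal--dual analysis. The only point you leave implicit is that the stopping rule together with boundedness forces $I\neq\emptyset$ (if all $N$ steps were non-productive, summing their inequalities at $x=x_*$ would give $\varepsilon^2|J|<\tfrac{\varepsilon^2}{2}|J|+\Theta_0^2$, i.e. $|J|<2\Theta_0^2/\varepsilon^2$, contradicting \eqref{eq1}), so that the division by $\sum_{k\in I}h_k$ and the definition of the pair $(\widehat{x}^{k},\widehat{\lambda}^{k})$ make sense.
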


Похожий на алгоритм \ref{alg4} метод можно предложить и для задач со следующими релаксациями известного неравенства Коши-Буняковского для субградиентов $\nabla f(x)$ и $\nabla g(x)$:
\begin{equation}\label{eq40}
\abr{\nabla f(x),x-y}\leqslant\omega\nbr{\nabla f(x)}_*\sqrt{2V(y,x)} \quad \forall y \in Q
\end{equation}
и произвольного $x\in Q:\;V(x_*,x)\geqslant\frac{\varepsilon^2}{2}$, некоторой фиксированной постоянной $\omega>0$ (здесь $x_*$ --- ближайшее решение к начальной точке $x_0$ с точки зрения дивергенции $V$), а также
\begin{equation}\label{eq41}
\abr{\nabla g(x),x-y}\leqslant M_g\sqrt{2V(y,x)}
\end{equation}
для константы $M_g>0$.

Ясно, что при $V(y,x)\geqslant\frac{1}{2}\nbr{y-x}^2$ неравенство \eqref{eq40} верно для $\omega=1$. Неравенство \eqref{eq41}, в частности, верно в случае относительной липшицевости $g$ \cite{Lu_1710.04718v3}. Рассмотрим следующий алгоритм 5.

\begin{algorithm}
\caption{Адаптивный зеркальный спуск}
\label{alg5}
\begin{algorithmic}[1]
\REQUIRE $\varepsilon>0,\;\Theta_0:d(x_*)\leqslant\Theta_0^2$
\STATE $x^0=argmin_{x\in Q}\,d(x)$
\STATE $I=:\emptyset$
\STATE $k\leftarrow0$
\REPEAT
  \IF{$g(x^k)\leqslant\varepsilon M_g$}
    \STATE $h_k^f\leftarrow\frac{\varepsilon}{||\nabla f(x^k)||_{*}^2}$
    \STATE $x^{k+1}\leftarrow Mirr_{x^k}(h_k\nabla f(x^k))\;\text{// \emph{"продуктивные шаги"}}$
    \STATE $k\rightarrow I$
  \ELSE
    \STATE $h_k^g\leftarrow\frac{\varepsilon}{M_g}$
    \STATE $x^{k+1}\leftarrow Mirr_{x^k}(h_k\nabla g(x^k))\;\text{// \emph{"непродуктивные шаги"}}$
  \ENDIF
  \STATE $k\leftarrow k+1$
\UNTIL{
  \begin{equation}\label{eqstoprule}
  \frac{2\Theta_0^2}{\varepsilon^2} \leqslant\sum\limits_{k\in I}\frac{\omega^2}{||\nabla f(x^k)||_{*}^2}+|J|,
  \end{equation}
  }
где $|J|$~--- количество непродуктивных шагов (мы обозначим через $|I|$ количество продуктивных шагов, то есть $|I|+|J|=N$).
\ENSURE{$\widehat{x}=\frac{1}{\sum\limits_{k\in I}h_k^f}\sum\limits_{k\in I}h_k^fx^k$.}
\end{algorithmic}
\end{algorithm}

Тогда из \eqref{eq40} и \eqref{eq41} имеем:
$$h_k\abr{\nabla f(x^k),x^k-x_*}\leqslant\frac{\omega^2\varepsilon^2}{2\nbr{\nabla f(x^k)}_*^2}+V(x_*,x^k)-V(x_*,x^{k+1})\;\;\forall k\in I,$$
$$\varepsilon^2< h_k g(x^k) \leqslant h_k\abr{\nabla g(x^k),x^k-x_*}\leqslant\frac{\varepsilon^2}{2}+V(x_*,x^k)-V(x_*,x^{k+1})\;\;\forall k\in J.$$
После суммирования указанных неравенств получаем:
$$\sum\limits_{k\in I}h_k\br{f(x^k)-f(x_*)}\leqslant\frac{\omega^2\varepsilon}{2}\sum\limits_{k\in I}h_k-\frac{\varepsilon^2}{2}|J|+V(x_*,x^0).$$

\begin{theorem}
После выполнения критерия остановки \eqref{eqstoprule} справедлива оценка:
\begin{equation}\label{eq42}
f(\widehat{x})-f(x_*)\leqslant\omega^2\varepsilon \text{  и  }g(\widehat{x})\leqslant\varepsilon M_g
\end{equation}
или $V(x_*,x^k)<\frac{\varepsilon^2}{2}$ для некоторого $k$.
\end{theorem}

\begin{remark}\label{StonRem8}
Вместо условия \eqref{eq40} можно рассмотреть и неравенство $\abr{\nabla f(x),x-y}\leqslant M_f\sqrt{2V(y,x)}$, которое верно в случае относительной липшицевости $f$ \cite{Lu_1710.04718v3}. Для этого необходимо выбирать в алгоритме \ref{alg5} продуктивные шаги $h_k^f = \frac{\varepsilon}{M_f^2}$, а также критерий остановки
$$2V(x_*,x^0) \leqslant 2\Theta_0^2\leqslant \frac{\varepsilon^2|I|}{M_f^2}+\varepsilon^2|J|.$$
После выполнения этого критерия будет верно неравенство
\begin{equation}\label{eq42}
f(\widehat{x})-f(x_*)\leqslant \varepsilon \text{ и }g(\widehat{x})\leqslant\varepsilon M_g.
\end{equation}
\end{remark}

Полученный результат позволяет применить предложенную алгоритмическую схему для негладких задач выпуклой однородной оптимизации с относительной точностью. Такая постановка восходит к работам Ю.\,Е.\,Нестерова (см. главу 6 диссертации \cite{Nesterov_2013a}). Как показано Ю.\,Е.\,Нестеровым, подход к оценке качества решения задачи с точки зрения именно относительной точности вполне оправдан для разных прикладных задач (линейное программирование, проектирование механических конструкций, задача оптимальной эллипсоидальной аппроксимации выпуклого множества и пр.) при подходящих условиях на желаемую относительную точность. Известно, что достаточно широкий класс задач оптимизации с относительной точностью можно сводить к минимизации выпуклой однородной функции. Итак, рассматривается на выпуклом замкнутом множестве $Q\subset\mathbb{R}^{n}$ задача минимизации выпуклой однородной функции вида
\begin{equation}\label{E1}
f(x)\rightarrow\min_{x\in Q}\end{equation}
с выпуклыми функционалами ограничениями
\begin{equation}\label{E2}
g_{p}(x)\leqslant0,\quad p=\overline{1,m}.
\end{equation}
Стандартно будем обозначать $g(x):=\max_{1\leqslant p\leqslant m}\{g_{p}(x)\}.$

С использованием рассмотренных вариантов концепции относительной липшицевости покажем, как можно выписать оценки сходимости для зеркальных спусков с переключениями в случае, когда $0$ не обязательно есть внутренняя точка субдифференциала $f$ в нуле. Рассмотрен следующий ослабленный вариант этого условия
\begin{equation}\label{EE5}
B_{\gamma_{0}}^{K^{*}}(0)\subseteq\partial f(0)\subseteq B_{\gamma_{1}}^{K^{*}}(0),
\end{equation}
где $K^{*}$~--- сопряженный конус к некоторому полунормированному конусу $K\subset\mathbb{R}^{n}$ с законом сокращения и конус-полунормой $\|\cdot\|_{K}$ (отличие от обычной полунормы в том, что $\|\alpha x\|_{K}=\alpha\|x\|_{K}$ лишь для $\alpha\geqslant 0$). Здесь под {\it сопряженным конусом} $K^{*}$ понимается набор функционалов вида $\psi_{\ell}=\max\{0,\ell(x)\}$ для линейных функционалов $\ell:\,K\rightarrow\mathbb{R}\,:\,\ell(x)\leqslant C_{\ell}\|x\|_{K}$ при некотором $C_{\ell}>0\,\,\forall\,x\in K$. Ясно \cite{Stonyakin_2016, Stonyakin_2018}, что $K^{*}$ будет выпуклым конусом с операциями сложения
$$\psi_{\ell_{1}}\oplus\psi_{\ell_{2}}:=\psi:\quad \psi(x)=\max\{0,\ell_{1}(x)+\ell_{2}(x)\}$$
и умножения на скаляр $\lambda\geqslant0$ $\psi_{\lambda\ell}(x)=\lambda\psi_{\ell}(x)=\lambda\max\{0,\ell(x)\}\quad \forall\,x\in K.$
На $K^{*}$ можно ввести норму
$\|\psi_{\ell}\|_{K^{*}}=\sup_{\|x\|_{K}\leqslant1}\max\{0,\ell(x)\}=\sup_{\|x\|_{K}\leqslant1}\ell(x)$
и шар радиуса $r$ $B_{r}^{K^{*}}(0)=\{\psi_{\ell}\in K^{*}\,|\,\,\|\psi_{\ell}\|_{K^{*}}\leqslant r\}$.

Из аналога теоремы об опорном функционале в нормированных конусах \cite{Stonyakin_2016} получаем, что
\begin{equation}\label{EE6}
\|x\|_{K}=\max_{\psi_{\ell}\in B_{1}^{K^{*}}(0)}\ell(x).
\end{equation}
Для полунормированного конуса при $\|x\|_{K}=0$ достаточно выбрать $\ell\equiv 0$ и \eqref{EE6} будет верно. Приведем некоторый пример пары $(K,K^{*})$.

\begin{example}
Пусть $K=\{(x,y)\,|\,x,y\in\mathbb{R}\}$ и $\|(x,y)\|_{K}=\sqrt{x^{2}+y^{2}}+y$. Можно проверить, что в таком случае
$K^{*}=\{\psi_{\ell}\,|\,\ell\left((x,y)\right)=\lambda x+\mu y\,:\,\mu+\frac{\lambda^{2}}{\mu}<+\infty\,\text{или}\,\lambda=\mu=0\}, \text{ а }$
$$
\|\psi_{\ell}\|_{K^{*}} =
\begin{cases}
0, & \text{если $\lambda=\mu=0$;} \\
\displaystyle\frac{\mu}{2}+\frac{\lambda^{2}}{2\mu}, & \text{если $\mu+\displaystyle\frac{\lambda^{2}}{\mu}<+\infty$ при $\mu>0$.}
\end{cases}
$$
Тогда $B_{1}^{K^{*}}(0)$ имеет вид круга на плоскости $(\lambda,\mu)$ радиуса $1$ с центром в точке $\lambda=0,\,\mu=1$. Не уменьшая общности рассуждений, будем полагать $K=\displaystyle\bigcup_{r\geqslant0}B_{r}^{K}(0)$, а также $x_{*}\in K$ для точного решения $x_{*}$ рассматриваемой задачи минимизации $f$ на $Q$.

Согласно схеме рассуждений (\cite{Nesterov_2013a}, глава 6) для вывода оценок скорости сходимости методов с относительной точностью необходимо знать оценку $R$ величины расстояния от точки старта $x^0$ до ближайшего решения $x_{*}$. Однако в конусах, вообще говоря, не задана операция вычитания и поэтому в качестве аналога нормы разности можно использовать метрику $d^{K}(x^{0},x_{*})$, где
$$d^{K}(x,y)=\sup_{\|\psi_{\ell}\|_{K^{*}}\leqslant 1}|\psi_{\ell}(x)-\psi_{\ell}(y)| \quad \forall x, y \in Q.$$
\end{example}
Некоторые условия, при которых нормированный конус допускает существование метрики такого типа, исследованы автором работы в \cite{Stonyakin_2016,Stonyakin_2018,Stonyakin_2019}.

Получен аналог теоремы 6.1.1 \cite{Nesterov_2013a} для указанного выше предположения ($x_{0},x_{*}\in K$, причём по аналогии с разд. 6 из \cite{Nesterov_2013a} полагаем $x^{0}: \|x^{0}\|_{K} = \min\{\|x\|_{K}, \; x \in Q\}$).
\begin{theorem}\label{th8}
\begin{itemize}
\item[$1)$] $\forall\,x\in K\quad\gamma_{0}\|x\|_{K}\leqslant f(x)\leqslant\gamma_{1}\|x\|_{K}$.
Более того,
$$\frac{\gamma_0}{\gamma_1} f(x^{0})\leqslant\gamma_{0}\|x^{0}\|_{K}\leqslant f(x_{*})\leqslant f(x^{0})\leqslant\gamma_{1}\|x^{0}\|_{K}.$$
\item[$2)$] Для всякого точного решения $x_{*}\in K$ справедливо неравенство:
$$d^{K}(x^{0},x_{*})\leqslant\|x^{0}\|_{K}+\|x_{*}\|_{K}\leqslant\frac{2}{\gamma_{0}}f^{*}\leqslant\frac{2}{\gamma_{0}}f(x^{0}).$$
\end{itemize}
\end{theorem}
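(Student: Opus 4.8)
The plan is to establish the two-sided estimate $\gamma_0\|x\|_K\leqslant f(x)\leqslant\gamma_1\|x\|_K$ first, since both assertions of the theorem reduce to it together with the defining optimality of $x^0$ (minimal $K$-norm on $Q$) and of $x_*$ (minimizer of $f$ on $Q$). The key tool is the representation of the positively homogeneous convex $f$ as the support function of its generalized subdifferential at the origin, $f(x)=\sup_{\psi_\ell\in\partial f(0)}\psi_\ell(x)$, valid in the $(K,K^*)$-calculus of \cite{Stonyakin_2016, Stonyakin_2018} (note $f(0)=0$ by homogeneity). Combining this with the inclusions \eqref{EE5}, with the positive homogeneity of the dual balls $\sup_{\|\psi_\ell\|_{K^*}\leqslant r}\psi_\ell(x)=r\sup_{\|\psi_\ell\|_{K^*}\leqslant1}\psi_\ell(x)$, and with the duality identity \eqref{EE6}, which after accounting for the $\max\{0,\cdot\}$ gives $\sup_{\|\psi_\ell\|_{K^*}\leqslant1}\psi_\ell(x)=\max\{0,\|x\|_K\}=\|x\|_K$, I would read off $f(x)\geqslant\sup_{\psi_\ell\in B^{K^*}_{\gamma_0}(0)}\psi_\ell(x)=\gamma_0\|x\|_K$ from the left inclusion and $f(x)\leqslant\sup_{\psi_\ell\in B^{K^*}_{\gamma_1}(0)}\psi_\ell(x)=\gamma_1\|x\|_K$ from the right one. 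This settles the first line of part~1).

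The remaining chain in part~1) is then pure bookkeeping. Evaluating the upper bound at $x^0$ gives $f(x^0)\leqslant\gamma_1\|x^0\|_K$, i.e. $\tfrac{\gamma_0}{\gamma_1}f(x^0)\leqslant\gamma_0\|x^0\|_K$; since $x^0$ minimizes $\|\cdot\|_K$ on $Q$ and $x_*\in Q$, we have $\|x^0\|_K\leqslant\|x_*\|_K$, whence $\gamma_0\|x^0\|_K\leqslant\gamma_0\|x_*\|_K\leqslant f(x_*)$ by the lower bound; finally $f(x_*)\leqslant f(x^0)$ because $x_*$ minimizes $f$, and $f(x^0)\leqslant\gamma_1\|x^0\|_K$ again by the upper bound. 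Chaining these four inequalities reproduces the displayed string.

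For part~2) I would start from the definition $d^K(x,y)=\sup_{\|\psi_\ell\|_{K^*}\leqslant1}|\psi_\ell(x)-\psi_\ell(y)|$. Since every $\psi_\ell=\max\{0,\ell\}\geqslant0$, the elementary bound $|\psi_\ell(x^0)-\psi_\ell(x_*)|\leqslant\psi_\ell(x^0)+\psi_\ell(x_*)$ holds, and subadditivity of the supremum together with the norm characterization above yields $d^K(x^0,x_*)\leqslant\sup_{\|\psi_\ell\|_{K^*}\leqslant1}\psi_\ell(x^0)+\sup_{\|\psi_\ell\|_{K^*}\leqslant1}\psi_\ell(x_*)=\|x^0\|_K+\|x_*\|_K$. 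Using once more $\|x^0\|_K\leqslant\|x_*\|_K$ I bound this by $2\|x_*\|_K\leqslant\tfrac{2}{\gamma_0}f(x_*)=\tfrac{2}{\gamma_0}f^*$ via the lower bound, and finally $f^*=f(x_*)\leqslant f(x^0)$ gives the last inequality $\tfrac{2}{\gamma_0}f^*\leqslant\tfrac{2}{\gamma_0}f(x^0)$.

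The main obstacle is the upper estimate $f(x)\leqslant\gamma_1\|x\|_K$: unlike the lower one, it does not follow from a single subgradient inequality but genuinely requires the support-function representation of $f$, hence the positive homogeneity of the objective and the fact that the generalized subdifferential calculus on $(K,K^*)$ indeed satisfies $f(x)=\sup_{\psi_\ell\in\partial f(0)}\psi_\ell(x)$. The second delicate point, recurring throughout, is the careful handling of the $\max\{0,\cdot\}$ in $\psi_\ell$ so that suprema of $\psi_\ell$ over the unit dual ball collapse exactly onto $\|\cdot\|_K$ rather than onto a signed quantity; once the nonnegativity of $\psi_\ell$ and the identity \eqref{EE6} are reconciled, the rest is routine.
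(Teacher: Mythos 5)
Your proposal cannot be checked against an in-paper argument, because the paper does not actually prove this theorem: it is stated as an analogue of Theorem~6.1.1 from \cite{Nesterov_2013a} (with $x^0$ redefined as the minimizer of $\|\cdot\|_K$ over $Q$ rather than a projection of the origin), and the text passes directly to its algorithmic consequences. Measured against the classical argument underlying Nesterov's theorem, your proof is exactly the right adaptation to the $(K,K^*)$ setting, and I find no gap in it: the lower bound $\gamma_0\|x\|_K\leqslant f(x)$ follows from the subgradient inequality applied to every $\psi_\ell\in B^{K^*}_{\gamma_0}(0)\subseteq\partial f(0)$; the upper bound $f(x)\leqslant\gamma_1\|x\|_K$ from the support-function representation $f(x)=\sup_{\psi_\ell\in\partial f(0)}\psi_\ell(x)$ combined with $\partial f(0)\subseteq B^{K^*}_{\gamma_1}(0)$; the collapse $\sup_{\|\psi_\ell\|_{K^*}\leqslant 1}\psi_\ell(x)=\max\{0,\|x\|_K\}=\|x\|_K$ is precisely the point where \eqref{EE6} and the nonnegativity of $\psi_\ell$ must be reconciled, and you do this; the two displayed chains then follow from the extremal properties of $x^0$ and $x_*$ exactly as you write them, including the triangle-type bound $|\psi_\ell(x^0)-\psi_\ell(x_*)|\leqslant\psi_\ell(x^0)+\psi_\ell(x_*)$ for $d^K$.

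Two remarks on hypotheses rather than on your logic. First, you correctly isolate the crux: the representation $f(x)=\sup_{\psi_\ell\in\partial f(0)}\psi_\ell(x)$ needs positive homogeneity of $f$ (hence $f(0)=0$) and a Hahn--Banach-type separation in the cone calculus of \cite{Stonyakin_2016,Stonyakin_2018}; the paper leaves this standing assumption implicit (it is inherited from the setting of Chapter~6 of \cite{Nesterov_2013a}, where condition \eqref{EE5} is posed for positively homogeneous objectives), so flagging it explicitly is an improvement, not a defect. Second, the qualifier on $x_*$ in part~2) must be read as ``any \emph{optimal} solution'' (optimal points need not be unique): your chain uses both $f(x_*)=f^*$ and $\|x^0\|_K\leqslant\|x_*\|_K$, which require $x_*$ to be a minimizer lying in $Q$; for a merely feasible $x_*$ the inequality $\|x_*\|_K\leqslant\tfrac{1}{\gamma_0}f^*$ would fail in general. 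With that reading, your proof is complete and is essentially the proof the paper tacitly relies on.
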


Для применимости к поставленной задаче приведенного выше метода зеркального спуска (алгоритм \ref{alg5}) с предложенными вариантами условий относительной липшицевости достаточно выбрать прокс-структуру так, чтобы
\begin{equation}\label{eq43}
V(x_*,x^0)\leqslant\widehat{\omega}d^K(x_*,x^0),
\end{equation}
для некоторой постоянной $\widehat{\omega}>0$.
Этого можно добиться, например, при $x^0=0$. Тогда если положить $d(x)=\nbr{x}_K$, то $V(x_*,0)=\nbr{x_*}_K=d^K(x_*,0)$, то есть \eqref{eq43} выполнено при $\widehat{\omega}=1$.

Поскольку функционал $f$ однороден, то при условии ограниченности субдифференциала $\partial f(0)$ для некоторой константы $M_f>0$ будет верно $\nbr{\nabla f(x)}_*\leqslant M_f$. Поэтому критерий остановки модифицированного алгоритма \ref{alg5} в соответствии с замечанием \ref{StonRem8} заведомо будет выполнен после $2\Theta_0^2 \max\fbr{1, M_f^2}\varepsilon^{-2}$ итераций. Будем полагать, что $\Theta_0^2 \geqslant \widehat{\omega} d^K(x^0,x_*)\geqslant V(x_*,x^0)$ и для некоторого $N$ (число итераций) выберем $\varepsilon=\frac{\Theta_0^2}{\sqrt{N}}$. При данном $\varepsilon$ после остановки модифицированного алгоритма \ref{alg5} будут верны неравенства $f(\widehat{x})-f(x_*)\leqslant\frac{\Theta_0^2}{\sqrt{N}}$ и $g(\widehat{x})\leqslant\frac{M_g\Theta_0^2}{\sqrt{N}}$. Теперь согласно замечанию \ref{StonRem8} имеем:
$$\frac{\Theta_0^2}{\sqrt{N}}\leqslant\frac{2f(x_*)}{\gamma_0\sqrt{N}},\text{ то есть }f(\widehat{x})\leqslant f(x_*)\br{1+\frac{2}{\gamma_0\sqrt{N}}}\text{ и }g(\widehat{x})\leqslant\frac{M_g\Theta_0^2}{\sqrt{N}}.$$
Поэтому для достижения относительной точности $\delta>0$ по функции заведомо достаточно
$N\geqslant\frac{4}{\gamma_0^2\delta^2}$
шагов модифицированного алгоритма \ref{alg5} c постоянными продуктивными шагами согласно замечанию \ref{StonRem8}.
Ясно, что при $2\max\{1, M_f^2\} \leqslant \Theta_0^2$ критерий остановки заведомо выполнен при любом $N$, поскольку в таком случае
$$\frac{2\Theta_0^2\max\fbr{1, M_f^2}}{\frac{\Theta_0^4}{N}}=\frac{2N\max\fbr{1, M_f^2}}{\Theta_0^2} \leqslant N.$$
Поэтому справедлива следующая

\begin{theorem}
Пусть однородный выпуклый $M_f$-липшицев функционал $f$ удовлетворяет предположению замечания \ref{StonRem8} и при этом $2\max\{1, M_f^2\} \leqslant \Theta_0^2$. Тогда после $N \geqslant \frac{4}{\gamma_0^2\delta^2}$ итераций модифицированного алгоритма \ref{alg5} c постоянными продуктивными шагами согласно замечанию \ref{StonRem8} гарантированно будут верны неравенства:
$$
f(\widehat{x}) \leqslant f(x_*)(1 + \delta) \text{   и   } g(\widehat{x}) \leqslant \frac{M_g \Theta_0^2}{\sqrt{N}}.
$$
\end{theorem}

Если не накладывать дополнительных условий на связь $M_f$ и $\Theta_0^2$, то это приведёт к увеличению количества итераций. В общем случае при $\varepsilon=\frac{\Theta_0^2}{\sqrt{N}}$ и $N\geqslant\frac{4}{\gamma_0^2\delta^2}$ критерий остановки заведомо будет выполнен после
$$
\frac{2\Theta_0^2\max\fbr{1, M_f^2}}{\frac{\Theta_0^4}{N}}=\frac{2N\max\fbr{1, M_f^2}}{\Theta_0^2} \geqslant\frac{8\max\fbr{1, M_f^2}}{\gamma_0^2\delta^2}
$$
итераций указанного метода. Это позволяет сформулировать следующий результат
\begin{theorem}
Пусть однородный выпуклый $M_f$-липшицев функционал $f$ удовлетворяет предположению замечания \ref{StonRem8}. Тогда при $N = \frac{4}{\gamma_0^2\delta^2}$ и $\varepsilon=\frac{\Theta_0^2}{\sqrt{N}}$ после не менее, чем
$$\frac{8\max\fbr{1, M_f^2}}{\gamma_0^2\delta^2}$$
итераций модифицированного алгоритма \ref{alg5} c постоянными продуктивными шагами согласно замечанию \ref{StonRem8} гарантированно будут верны неравенства:
$$
f(\widehat{x}) \leqslant f(x_*)(1 + \delta) \text{   и   } g(\widehat{x}) \leqslant \frac{M_g \Theta_0^2}{\sqrt{N}}.
$$
\end{theorem}

\section*{Заключение}

В настоящей статье предложены некоторые адаптивные методы градиентного типа, применимые к задачам негладкой оптимизации.

В первых трёх разделах рассмотрены подходы, основанные на введении погрешностей. Погрешности могут быть как естественными (задание целевой функции и градиента), так и искусственными (сведение негладкой задачи к гладкой с некоторой неточностью). При этом одна из основных отличительных особенностей предлагаемых подходов~--- наличие в оптимизационной модели целевого функционала переменной величины, соответствующей погрешности градиента или степени негладкости задачи.

Получены оценки скорости сходимости для адаптивного неускоренного градиентного метода с адаптивной настройкой на гладкость задачи и величины погрешностей. Рассмотрен также вариант быстрого градиентного метода Ю.Е. Нестерова для соответствующей концепции неточной модели оптимизируемого функционала. Для неускоренного метода не накапливаются все рассматриваемые типы погрешностей. Для ускоренного метода обоснована возможность уменьшения влияния переменной погрешности на оценку качества решения до любой приемлемой величины при накоплении величин, соответствующих постоянным значениям величин погрешностей используемой концепции модели оптимизируемого функционала. При этом адаптивность метода может на практике улучшать качество найденного решения по сравнению с полученными теоретическими оценками. Однако в полученных оценках качества решения реализована адаптивная настройка не всех параметров неточной модели. Полная адаптивная настройка на величины погрешностей возможна для искусственных неточностей, связанных например с рассмотрением негладких задач.

Обоснована применимость неускоренных процедур для относительно гладких целевых функционалов. В таком случае полученную оценку скорости сходимости $O(\varepsilon^{-1})$ можно считать оптимальной даже при отсутствии погрешностей \cite{Dragomir_1911.08510}. Показано, как можно ввести аналогичную концепцию неточной модели для вариационных неравенств и седловых задач и обосновать оценку скорости сходимости для аналога экстраградиентного метода с адаптивной настройкой на величину детерминированного шума.

В последнем шестом разделе работы рассмотрены адаптивные алгоритмические схемы с переключениями для негладких задач выпуклой оптимизации с липшицевыми ограничениями, которые достаточно эффективно работают для некоторых задач с целевыми функционалами более низкого уровня гладкости. В частности, речь может идти о задачах с дифференцируемыми гёльдеровыми целевыми функционалами \cite{Ivanova_1911.07354v2} или с относительно липшицевыми целевыми функционалами \cite{Lu_1710.04718v3} и функционалами ограничений. Введённые  релаксации условия Липшица, в частности, позволяют получать оценки скорости сходимости с относительной точностью для однородных целевых функционалов при достаточно общих предположениях. Однако в отличие от результатов первых разделов работы по методам градиентного типа не удалось предложить методы с адаптивной настройкой на величины погрешностей. Тем не менее, в таких схемах возможно использование $\delta$-субградиентов вместо обычных субградиентов, а также возмущенных с точностью $\delta>0$ значений функционалов. В итоговых оценках при этом не происходит накопление величин, соответствующих $\delta$ \cite{Nurminsky}. Это означает в частности что в итоговых оценках скорости сходимости не накапливаются величины, соответствующие погрешностям, возникающим при решении вспомогательных задач на итерациях алгоритма \ref{alg5}. Указанная методика применима к задачам с любым количеством функционалов ограничений. В этом плане интересной задачей может быть сравнение разработанной методики с подходом к негладким задачам выпуклого программирования с одним функционалом ограничения, который основан на переходе к одномерной двойственной задаче. При этом нахождение подходящего значения двойственного множителя может выполняться методом дихотомии \cite{Stonyakin_MOTOR_2019} при допустимой погрешности значения производной, связанной с неточностью решения вспомогательных подзадач. Для задач с двумя ограничениями можно применять методику \cite{Pasechnjuk_2019} с критерием остановки, похожим на критерий остановки алгоритма 1 из \cite{Stonyakin_MOTOR_2019} и соответствующим подходящему значению возмущенного градиента двойственной задачи. Экспериментально показано, что такие подходы могут приводить к линейной скорости сходимости даже для негладких задач. Если же целевой функционал гладкий и сильно выпуклый, а функционалы ограничений гладкие и выпуклые, то близкую к линейной скорость сходимости можно обосновать \cite{Stonyakin_MOTOR_2019}.

Статья опубликована в \cite{Stonyakin_2020}. Настоящая версия текста содержит некоторые уточнения, а также исправления допущенных ранее опечаток.


\begin{thebibliography}{99}

\bibitem{NemYud_1979}
\textit{Немировский\,А.\,С., Юдин\,Д.\,Б.}
Сложность задач и эффективность методов оптимизации.~//
М.: Наука, 1979. 384~с.

\bibitem{Nesterov_2015}
\textit{Nesterov\,Yu.}
Universal gradient methods for convex optimization problems.~//
Math. Program. Ser.~A. 2015. 152, pp.~381--404.

\bibitem{Ston_new}
\textit{Стонякин\,Ф.\,С.}
Аналог квадратичной интерполяции для специального класса негладких функционалов и~одно его приложение к~адаптивному методу зеркального спуска.~//
Динамические системы~--- 2019.~--- Т. 9(37), № 1. С. 3--16.

\bibitem{StonyakinTrudy}
\textit{Стонякин \,Ф.\,С.} Адаптация к погрешности для некоторых методов градиентного типа. ~// Труды Института математики и механики УрО РАН, 2019. Т. 25, № 4. С. 210-225.


\bibitem{s1}
\textit{Devolder\,O., Glineur\,F., Nesterov\,Yu.}
First-order methods of smooth convex optimization with inexact oracle.~//
Math. Program. 2014. Vol. 146, no.~1-2. P. 37--75.

\bibitem{DevolderThesis}
\textit{Devolder\,O.}
Exactness, Inexactness and Stochasticity in First-Order Methods for Large-Scale Convex Optimization.~//
PhD thesis (2013).

\bibitem{s2}
\textit{Tyurin\,A.\,I., Gasnikov\,A.\,V.}
Fast gradient descent method for convex optimization problems with an oracle that generates a $(\delta, L) $-model of a function in a requested point.~//
Computational Mathematics and Mathematical Physics. 2019.~Vol.59, no.~7. P. 1137--1150.

\bibitem{inexact_model_2019}
\textit{Stonyakin\,F.\,S., Dvinskikh\,D., Dvurechensky\,P., Kroshnin\,A., Kuznetsova\,O., Agafonov\,A., Gasnikov\,A., Tyurin\,A., Uribe\,C.\,A., Pasechnyuk\,D., Artamonov\,S.}
Gradient Methods for Problems with Inexact Model of the Objective.~//
In: Khachay M., Kochetov Y., Pardalos P. (eds) Mathematical Optimization Theory and Operations Research. MOTOR 2019. Lecture Notes in Computer Science. Springer, Cham. 2019. Vol. 11548. P. 97--114.


\bibitem{Bauschke_2017}
\textit{Bauschke\,H.\,H., Bolte\,J., Teboulle\,M.}
A Descent Lemma Beyond Lipschitz Gradient Continuity: First-Order Methods Revisited and Applications.~//
Mathematics of Operations Research, vol.~42, n.~2, May 2017, pp.~330--348.

\bibitem{Necoara_2019}
\textit{Necoara\,I., Nesterov\,Y. Glineur\,F.}
Linear convergence of first order methods for non-strongly convex optimization.~//
Math. Program. 2019. Vol. 175. P.~69--107.


\bibitem{s3}
\textit{Lu\,H., Freund\,R.\,M., Nesterov\,Y.}
Relatively smooth convex optimization by Firstorder methods, and applications.~//
SIAM Journal on Optimization. 2018. Vol. 28, no.~1. P. 333--354.

\bibitem{Lu_1710.04718v3}
\textit{Lu\,H.}
"Relative-Continuity" for Non-Lipschitz Non-Smooth Convex Optimization using Stochastic (or Deterministic) Mirror Descent.~//
Arxiv preprint (2018): \url{https://arxiv.org/abs/1710.04718v3}.

\bibitem{Stonyakin_1908.00218v4}
\textit{Stonyakin\,F., Stepanov\,A., Titov\,A., Gasnikov\,A.}
Mirror Descent for Constrained Optimization Problems with Large Subgradient Values.~//
Copmuter Research and Modelling. 2020, No. 2. \url{https://arxiv.org/abs/1908.00218v4}.


\bibitem{Lan_2016}
\textit{Lan\,G.}
Gradient sliding for composite optimization.~// Math. Program. 2016. Vol.~159, no.~1--2, P.~201--235.

\bibitem{paper:Nesterov1983}
Нестеров Ю.Е. Метод минимизации выпуклых функций со скоростью сходимости $O(1/k^2)$ // Доклады АН СССР. 1983. Т. 269. № 3. С. 543--547.

\bibitem{Stonyakin_1907.13455v4}
\textit{Stonyakin\,F., Vorontsova\,E., Alkousa\,M.}
New Version of Mirror Prox for Variational Inequalities with Adaptation to Inexactness.~//
Communications on Computer and Information Sciences. Vol. 1145, 2020. \url{https://arxiv.org/abs/1907.13455v4}.

\bibitem{Stonyakin_2019_uroran-25-2}
\textit{Стонякин\,Ф.\,С.}
Об адаптивном проксимальном методе для некоторого класса вариационных неравенств и смежных задач.~//
Тр.~ИММ УрО РАН, 25, №~2, 2019, 185--197.

\bibitem{Polyak_1967}
\textit{Поляк\,Б.\,Т.}
Один общий метод решения экстремальных задач.~//
Докл. АН СССР, 174:1 (1967), с.~33--36.

\bibitem{Shor_1967}
\textit{Шор\,Н.\,З.}
Применение обобщённого градиентного спуска в блочном программировании.~//
Кибернетика, 1967, №~3, с.~53--55.

\bibitem{Bayandina_2018_lectures}
\textit{Bayandina\,A., Dvurechensky\,P., Gasnikov\,A., Stonyakin\,F., Titov\,A.}
Mirror descent and convex optimization problems with non-smooth inequality constraints.~//
Large-Scale and Distributed Optimization, Lect. Notes Math., 2227, Springer, Cham, 2018, 181--213.

\bibitem{Ivanova_1911.07354v2}
\textit{Ivanova\,A., Stonyakin\,F., Pasechnyuk\,D., Vorontsova\,E., Gasnikov\,A.}
Adaptive Mirror Descent for the Network Utility Maximization Problem.~//
Arxiv preprint (2019): \url{https://arxiv.org/abs/1911.07354v2}.


\bibitem{Stonyakin_2016}
\textit{Stonyakin\,F.\,S.}
An analogue of the Hahn–Banach theorem for functionals on abstract convex cones.~//
Eurasian Math. J., 7:3 (2016), 89--99.

\bibitem{Stonyakin_2018}
\textit{Stonyakin\,F.\,S.}
Сублинейный аналог теоремы Банаха–Мазура в отделимых выпуклых конусах с нормой.~//
Матем. заметки, 104:1 (2018), С.~118-–130

\bibitem{Stonyakin_2019}
\textit{Stonyakin\,F.\,S.}
Hahn–Banach type theorems on functional separation for convex ordered normed cones.~//
Eurasian Math. J., 10:1 (2019), P.~59-–79.

\bibitem{Nesterov_2013a}
\textit{Нестеров\,Ю.\,Е.}
Алгоритмическая выпуклая оптимизация.~//
Дисс.~\ldots докт. физ.-мат. наук: 01.01.07 / Нестеров Юрий Евгеньевич; [Место защиты: Моск. физ.-техн. ин-т (гос. ун-т)].~--- Москва, 2013.~--- 367~с.: ил. РГБ ОД, 71 15-1/114.

\bibitem{Dragomir_1911.08510}
\textit{Dragomir\,R.-A., Taylor\,A., d\'Aspremont\,A., Bolte\,J.}
Optimal Complexity and Certification of Bregman First-Order Methods.~//
Arxiv preprint (2019): \url{https://arxiv.org/abs/1911.08510}.

\bibitem{Nurminsky}
\textit{Dvurechensky P., Gasnikov A., Nurminsky E. and Stonyakin F.} Advances in Low-Memory Subgradient Optimization. // A. M. Bagirov et al. (eds.), Numerical Nonsmooth Optimization, Springer Nature Switzerland AG, 2020. URL: https://arxiv.org/abs/1902.01572v1.


\bibitem{Stonyakin_MOTOR_2019}
\textit{Stonyakin\,F.\,S., Alkousa\,M.\,S., Titov\,A.\,A., Piskunova\,V.\,V.}
On Some Methods for Strongly Convex Optimization Problems with One Functional Constraint.~//
Mathematical Optimization Theory and Operations Research. MOTOR 2019. Lecture Notes in Computer Science, vol 11548. Springer, Cham.

\bibitem{Pasechnjuk_2019}
\textit{Пасечнюк\,Д.\,А., Стонякин\,Ф.\,С.}
Об одном методе минимизации выпуклой липшицевой функции двух переменных на квадрате.~//
Компьютерные исследования и моделирование, 11:3 (2019), с.~379--395.

\bibitem{Stonyakin_2020}
\textit{Стонякин Ф.С.} Адаптивные градиентные методы для некоторых классов задач негладкой оптимизации. // Труды МФТИ. 2020. Т. 12(45), № 1. С. 112 -- 136.
\end{thebibliography}
\end{document}